\theoremstyle{plain}
\newtheorem{theorem}{Theorem}
\numberwithin{theorem}{section}
\newtheorem{lemma}[theorem]{Lemma}          
 \newtheorem{proposition}[theorem]{Proposition}
\theoremstyle{definition}
\newtheorem{notation}[theorem]{Notation}
\newtheorem{remark}[theorem]{Remark}
\def \y {{\eta}}
\def \s {{\sigma}}
\def \g {{\gamma}}
\def \a {{\alpha}}
\def \b {{\beta}}
\def \m {{\mu}}
\def \l {{\lambda}}
\def \f {{\varphi}}
\def \G {{\Gamma}}
\def \t {{\tau}}
\def \d {{\delta}}
\newcommand\eps{\varepsilon}
\def \p {{\partial}}
\newcommand\R{\mathbb{R}}
\newcommand\Eb{E}
\newcommand\Lc{\mathscr{L}}
\newcommand\Nb{\mathbb{N}}
\def \F {\mathcal{F}}
\newcommand\Pb{\mathbb{P}}
\newcommand{\<}{\langle}
\renewcommand{\>}{\rangle}
\renewcommand{\(}{\left(}
\renewcommand{\)}{\right)}
\renewcommand{\[}{\left[}
\renewcommand{\]}{\right]}
\def \uuu {{h}}
\definecolor{blue}{rgb}{0,0,1}
\definecolor{violet}{rgb}{1,0,1}
\begin{document}

\title{Strong regularization by noise for a class of kinetic SDEs driven by symmetric $\alpha$-stable processes}

\author{
Giacomo Lucertini
\thanks{Dipartimento di Matematica, Universit\`a di Bologna, Piazza di Porta S. Donato 5, 40126 Bologna, Italy.
Email: {\tt giacomo.lucertini3@unibo.it}.}
\and
St\'ephane Menozzi
\thanks{Laboratoire de Modélisation Mathématiques d'Evry (LAMME), UMR CNRS 8071, Universit\'e d'Evry Val d'Essonne-Paris Saclay, 23  Boulevard de France, 91037 Evry, France.
Email: {\tt  stephane.menozzi@univ-evry.fr}.}
\and
Stefano Pagliarani
\thanks{Dipartimento di Matematica, Universit\`a di Bologna, Piazza di Porta S. Donato 5, 40126 Bologna, Italy.
Email: {\tt stefano.pagliarani9@unibo.it}.}
}
\date{This version: \today}
\maketitle

\begin{abstract}

We establish strong well-posedness for a class of degenerate SDEs of kinetic type with autonomous diffusion driven by a symmetric $\alpha$-stable process under H\"older regularity conditions for the drift term. 
We partially recover the thresholds for the H\"older regularity that are optimal for weak uniqueness. 
In general dimension, we only consider $\alpha=2$ and need an additional integrability assumption 
for the gradient of the drift: this condition is satisfied by Peano-type functions. 
In the one-dimensional case we do not need any additional assumption. 
In the multi-dimensional case, the proof is based on a first-order Zvonkin transform/PDE, 
while for the one-dimensional case we use a second-order Zvonkin/PDE transform together with a Watanabe-Yamada technique. 
\newline

\noindent \textbf{Keywords}: {Stochastic differential equations, {Kinetic dynamics, strong uniqueness}}
\newline
\noindent \textbf{2010 Mathematics Subject Classification}: {60H30, {60H10, 60H50}}
\newline

{\noindent\textbf{Acknowledgements}:  The research of G.L. and S.P was partially supported by the INdAM - GNAMPA project CUP\_J53D23003800006. The research of S.P was partially supported by the PRIN22 project CUP\_E53C23001670001. The research of S.M. and S.P was partially supported by the INdAM - GNAMPA project CUP\_E53C23001670001.}
\end{abstract}

%
\section{Introduction}

We are concerned with proving strong well-posedness for degenerate systems of stochastic differential equations of the following type:
\begin{align}\label{eq:sde}
  dV_{t}&=\m(t,V_{t})dt+\s(t,V_{t-})dL^{(\alpha)}_{t},\\ \label{eq:sde_x}
  dX_{t}&=(V_{t} + F(t,X_t) ) dt,	
\end{align}
where 
\begin{itemize}
\item $(L^{(\alpha)}_t)_{t\in[0,T]}$ is a $d$-dimensional symmetric (i.e. rotationally invariant) $\alpha$-stable L\'evy process with $\alpha \in (1,2]$ {(namely, we address the sub-critical regime)};
\item $\sigma:[0,T] \times \R^d \to \mathcal{M}^{d\times d}$ and $\mu,F:[0,T] \times \R^d \to \R^d$ are measurable functions.
\end{itemize}

In particular, the pair $(V_t,X_t)$ takes values on $\R^d\times\R^d=\R^{2d}$, although the noise $L^{(\alpha)}$ directly acts on the first $d$ components. Note also that, for $\alpha=2$, $L^{(\alpha)}$ is a $d$-dimensional Brownian motion, while, for $\alpha\in(1,2)$, $L^{(\alpha)}$ is a pure-jump L\'evy process generated by the fractional Laplacian
\begin{equation}\label{eq:gen_lap_frac}
\Delta^{\frac{\alpha}{2}} g (v) := \frac{1}{2} \int_{\R^d}  \big[ g(v+w) + g(v-w) - 2 g(v)  \big]  |w|^{-d-\alpha} dw .
\end{equation}

Hereafter, we refer to Eq. \eqref{eq:sde}-\eqref{eq:sde_x} as \emph{kinetic-type} SDE because it can be viewed as a perturbation of the kinetic system obtained by setting $F\equiv 0$ in \eqref{eq:sde_x}. In this case, $(V_t,X_t)$ can be interpreted as the pair velocity-position of a particle subject to stochastic force. 

The reason for studying kinetic-type systems like \eqref{eq:sde}-\eqref{eq:sde_x} is twofold. On the one hand, models of this form appear in the literature in several applications ({physics/hydrogeology: \cite{MR3671993}, mathematical finance: \cite{MR2791231}}, {among others: \cite{delarue2010density} and references therein}). On the other hand, \eqref{eq:sde}-\eqref{eq:sde_x} is a prototypical example of a more general class of systems, where the noise directly acts on a set of components and propagates to the others by means of H\"ormander-type conditions on the drift.

Note that the jump-diffusion stochastic differential equation 
$\eqref{eq:sde}$ is autonomous, i.e. its coefficients do not depend on $X$, thus the uniqueness of $V$ is fairly well understood. In particular, we can rely on a {vast} literature on \emph{regularization-by-noise} phenomena, namely when the drift coefficient $\mu$ is irregular, say H\"older continuous or even distributional, but pathwise uniqueness for the equation can be restored thanks to the action of the noise.

Equation \eqref{eq:sde_x}, however, is actually an ordinary differential equation with stochastic source. Clearly, by the classical Cauchy theory, if the component $V$ is unique then 
the uniqueness of the component $X$ is trivial so long as $F(t,\cdot)$ is Lipschitz continuous. On the other hand, when $F(t,\cdot)$ is rougher, regularization-by-noise pehonomena are not obvious as the impact of the noise in the dynamics of $X$ is weaker than in the dynamics of $V$. Indeed, it is known that some \emph{regularity thresholds} on $F$ arise in order to establish uniqueness results for $X$.  For instance, in \cite{MR3808994} and {\cite{MR4591369}} it was shown that weak uniqueness is ensured if $F(t,\cdot)$ is $\beta$-H\"older continuous with $\beta>1/{(1+\alpha)}$, while it may fail if $\beta<1/{(1+\alpha)}$. On the other hand, Zvonkin transform-based techniques, relying on Schauder estimates for the semigroup,  allow to prove pathwise uniqueness, in general, only if $\beta>(1+\alpha/2)/(1+\alpha)$ (\cite{MR4498506}, \cite{MR4124429}). In this paper we close the gap between these two thresholds,  namely we prove that pathwise uniqueness holds so long as $\beta>1/(1+\alpha)$, for two particular cases of \eqref{eq:sde}-\eqref{eq:sde_x}: (i) ``always" in dimension one ($d=1$); (ii) when $F(t,\cdot)$ is a ``Peano-like" function and $\alpha=2$ (Brownian noise case), regardless of the dimension ($d\in\mathbb{N}$).

\subsection{Well-posedeness of kinetic-type SDEs: state of the art}

Consider the system:
\begin{equation}\label{eq:sde_non}
\begin{cases}
  dV_{t}=\m(t,V_{t},X_t)dt+\s(t,V_{t-},X_t)dL^{(\alpha)}_{t},\\
  dX_{t}=(V_{t} + F(t,X_t) ) dt,	
\end{cases}
\end{equation}
which is slightly more general than \eqref{eq:sde}-\eqref{eq:sde_x} in that the first equation is no longer autonomous. 

%
%
%

{
Many results can be extended replacing the equation for $X_t$ with the more general one
\begin{equation}
\label{MORE_GEN_DEG}
dX_{t}=G(t,V_{t},X_t)  dt,
\end{equation}
under a weak H\"ormander-type condition on $G$ (see for example \cite{MR3606742} or \cite{MR4498506}).
}

The current work can be seen as an illustration of the regularization by noise phenomenon. The idea behind is that adding a noise to an \textit{a priori} ill-posed differential equation can somehow restore some form of uniqueness. We will here focus on {pathwise} uniqueness for the corresponding SDE, which thanks to the {Yamada-Watanabe theorem} \cite{MR0288876} actually amounts, once weak existence is established, to prove {strong well-posedness}.
To derive such a property, Zvonkin introduced a seminal idea in his work \cite{MR0336813} consisting in expressing the time integral of a \textit{bad drift} in terms of a corresponding \textit{smoother} PDE (which actually benefits from some kind of parabolic bootstrap). Through this approach, known as the \textit{Zvonkin transform}, he obtained therein strong well-posedness for scalar Brownian diffusions with bounded drifts. These results were extended to the multidimensional setting by Veretennikov \cite{MR0568986}.

Another step was achieved by Krylov and Röckner \cite{MR2117951}, who managed to obtain strong well-posedness for Brownian SDEs with additive {noise} and inhomogeneous drifts in Lebesgue spaces satisfying a Serrin type condition. This result was later generalized by Zhang \cite{MR3010120} who considered similar drifts with multiplicative diffusion coefficient in an appropriate Sobolev space. Namely, the generalized first order derivative is assumed to have appropriate integrability conditions (enlarging the \textit{usual} Lipschitz setting for the diffusion coefficient in the multi-dimensional case). Similar results were obtained in the stable driven case by Xie and Zhang \cite{MR4058986}. In this setting, the mere Serrin type condition is not enough to restore strong well-posedness and some additional regularity, in appropriate Bessel potential spaces, is needed for the drift. Let us quote as well various recent works concerning the critical Serrin type condition \cite{MR4040996}, \cite{MR4348687}, \cite{rock:zhao:21}.

Let us also mention, in the {non-Markovian setting}, a rough-path type approach to strong uniqueness, see e.g. Catellier and Gubinelli \cite{MR3505229}, who derived for SDEs driven by an additive fractional Brownian motion, strong well-posedness for a drift with smoothness index strictly greater than $1-1/(2H)$ where $H$ stands for the Hurst parameter of the driving noise, which is also the corresponding self-similarity index.

For a more complete \textit{tour} about regularization by noise we can refer to the survey/monograph by Flandoli  \cite{MR2796837}, see also \cite{MR2593276} for connections with some transport PDEs.

We have up to now mentioned results concerning how adding a noise could restore well-posedness for a {non-degenerate} system. Let us also indicate that the regularization by noise phenomenon also occurs when handling kinetic type (or Hamiltonian) dynamics as the first order term in \eqref{eq:sde_non}. Of course, in that setting, since the noise acts through its integral, it is somehow damped and the noise can restore well-posedness for drifts that are less rough than in the non-degenerate setting. In the current kinetic case of equation \eqref{eq:sde_non}, and even for the more general dynamics \eqref{MORE_GEN_DEG} for the degenerate component, it was established in the Brownian setting, by Chaudru de Raynal \cite{MR3606742},  through the Zvonkin type approach that strong uniqueness holds provided   the drift in the degenerate variable is H\"older with regularity index greater than $2/3$ (see also \cite{MR3511355} for an extension to Dini continuous coefficients or \cite{MR3661662}, \cite{MR3833741} for a kinetic model with a drift in Bessel potential space with \textit{sharp} index 2/3). Observe that this threshold actually matches the one appearing in \cite{MR3505229} replacing $H$ by the current self similarity index of the integral of the Brownian motion, namely $1-1/(2 \times \frac 32) =\frac 23$. On the other hand, in the  same setting, it was established in \cite{MR3808994} 
that weak well-posedness holds provided the spatial regularity in the non-degenerate variable is greater than $1/3$ in the whole drift coefficient. This result was then relaxed in \cite{MR4358660} where it was proved that this smoothness condition is actually needed only for the coefficient of the degenerate component. Let us specify that 
{\cite{MR4358660}} actually addresses weak well-posedness for a somehow general model of noise propagating through a chain of differential equations whose coefficients satisfy a non-degeneracy assumption allowing the noise to transmit from one level to another.
For the strong uniqueness related to such models, the corresponding problem was addressed in \cite{MR4498506}. We can also refer to \cite{MR4588621} for similar results in the non-Markovian fractional setting. 

Note that, in Brownian kinetic setting, the threshold $1/3$ for weak well-posedness seems to actually be \textit{almost} sharp in the sense that some counter-examples to weak well-posedness were provided in \cite{MR3808994}. Similar issues have been discussed as well  in the pure jump stable case. Let us mention \cite{MR4591369} for weak uniqueness of degenerate systems (including the model considered in the paper). The threshold appearing is then $1/(1+\alpha) $, i.e. there is continuity as well in the stability index for weak well-posedness, and the threshold is also \textit{almost sharp}, {in that some counterexamples to weak well-posedness can be built if the drift coefficient is strictly less than $1/(1+\alpha)$ H\"older continuous.}
Eventually, the strong well-posedness in the pure-jump kinetic stable driven case was discussed in \cite{MR4124429}, where it is proved that strong uniqueness holds, in the H\"older setting,  as soon as the  drift   has regularity greater than  $(1+\alpha/2)/(1+\alpha) $ in the degenerate variable for $\alpha>1 $. There is again some kind of continuity with the thresholds appearing in the Brownian case.

\subsection{
Main results and open problems}

Hereafter, throughout the paper, $T>0$ is a fixed time-horizon. 
First we state a structural assumption for the system \eqref{eq:sde}-\eqref{eq:sde_x}, which is a non-degeneracy condition on the diffusion of the $V$ component.

\begin{itemize}
\item[$\text{[{\bf Coe}]}$] (coercivity of $\sigma\s^*$) There exists $\Lambda>0$ such that
	\begin{equation}
	\Lambda^{-1}|\y|^2\leq\<\s\s^*(t,v)\y,\y\>\leq\Lambda|\y|^2, \qquad 
	 v,\y\in\R^{d},
	\label{eq:holdsigma}
	\end{equation}
	for almost every $t\in[0,T]$.
\end{itemize}

To state the regularity assumptions on the coefficients we introduce the following H\"older spaces:
\begin{notation}
Let $\beta\in (0,1]$. 
\begin{itemize}
\item $C^\b
$ and  $C^\b_b$
denote the spaces of 
functions $g:\R^d \to \R$ such that
	 \begin{equation}
	 |g|_{C^\b}:=\sup_{x\neq y}\frac{|g(x)-g(y)|}{|x-y|^\b}<+\infty  , \qquad \|g\|_{C^\b_b}:=|g|_{C^\b}+\sup_{x\in\R^d}|g(x)|<+\infty  ,
	 \end{equation}
	 respectively, {i.e. $C^\beta $ denotes the \textit{homogeneous} H\"older space while $C_b^\beta$ includes the supremum norm}.
\item $L^\infty_TC^\b$ and $L^\infty_TC^\b_b$ denote the spaces of measurable functions $f=f(t,\cdot)\colon[0,T]\to C_b(\R)$ such that
\begin{equation}
	\sup\limits_{t\in[0,T]}|f(t,\cdot)|_{C^\b}+\sup\limits_{t\in[0,T]}|f(t,0)| < +\infty, \qquad 
	\sup\limits_{t\in[0,T]}\|f(t,\cdot)\|_{C^\b_b} < +\infty,
	\end{equation}
	respectively.
\end{itemize}
\end{notation}


\begin{remark}
Let $\alpha\in (1,2]$. Under Hypothesis [{\bf Coe}] and assuming 
$\mu,\sigma \in L^\infty_TC^\g_b$, $F \in L^\infty_TC^{\frac{1+\g}{1+\alpha}}$ for some $\gamma\in(0,1)$, \eqref{eq:sde}-\eqref{eq:sde_x} is weakly well-posed (see {\cite{MR4591369}} in the pure-jump case {and \cite{MR4358660} for $\alpha=2 $}
).
\end{remark}
In this paper 
$F \in L^\infty_TC^{\frac{1+\g}{1+\alpha}}$, for an arbitrary $\gamma>0$, is the basic regularity assumption on the coefficient in \eqref{eq:sde_x}. {Concerning the coefficients $\mu$ and $\sigma$ in \eqref{eq:sde}, different regularity assumptions will be required depending on the dimension $d$ and on the value of $\alpha$ (see Theorem \ref{th:well_posedness_autonomous})}.

We now introduce a last condition on the drift coefficient $F$, which will be required in order to prove pathwise uniqueness in the multi-dimensional diffusive case. First we introduce the following
\begin{notation}\label{not:moll}
Let $f:[0,T]\times \R^d \to \R$, for any $\eps>0$ we denote by $f^{(\eps)}:[0,T]\times \R^d \to \R$ the function
\begin{equation}\label{eq:moll}
f^{(\eps)}(t,x) : = \int_{\R^d} \Phi_{\eps}(x-y) f(t,y) dy, \qquad \Phi_{\eps}(y) = \frac{1}{\eps^d} {\Phi\left (\frac y\eps\right)},
\end{equation}
where $\Phi$ is a (fixed) smooth probability density on $\R^d$ with compact support.
\end{notation}

\begin{itemize}
\item[$\text{[{\bf Pea}]}$] 
For any $\lambda>0$, there exists a non-negative $g\in L^1([0,T])$ such that
	\begin{align}\label{eq:moment_estimate}
	\int_{\R^{d}}|\nabla_y F^{(\eps)}(s,y)|\Gamma(\lambda
	(s-t)^3,y-\theta)dy \leq g(s-t)
	,\\
	0\leq t < s \leq T,\quad \theta\in\R^d, \quad \eps>0.
	\end{align}
Here {$\Gamma=\Gamma(t,x)$} is the $d$-dimensional Gaussian density
	\begin{equation}\label{eq:standard_gauss}
	{\Gamma(t,x)=\frac{1}{(2\pi t)^{\frac{d}{2}}} \ e^{-\frac{1}{2}\frac{|x|^2}{t}}, \qquad (t,x)\in(0,\infty)\times\R^{d}.}
	\end{equation}
\end{itemize}

We refer to the latter as to a \emph{Peano-like} assumption, in that the simplest non-trivial example of H\"older continuous function that satisfies [{\bf Pea}] is given by (see Proposition \ref{prop:peano}) 
\begin{equation}\label{eq:ex_peano}
F(t,x):= a(t) |x|^{\beta}, \qquad \beta \in(1/3,1], \qquad a\in L^{\infty}([0,T]),
\end{equation}
giving rise to a Peano-like differential equation for $X$. We highlight that $\beta>1/3$ coincides with the threshold for weak uniqueness previously established in \cite{MR3808994}.
The probabilistic intuition of Hypothesis [{\bf Pea}] is clarified in the following
\begin{remark}\label{rem:bound_mean_nab_Xeps}
Let $\alpha=2$, namely $L^{(\alpha)}$ is a Brownian motion. Then, under Hypothesis [{\bf Coe}] and assuming 
$\mu,\sigma \in L^\infty_TC^\g_b$, $F \in L^\infty_TC^{\frac{1+\g}{3}}$ for some $\gamma\in(0,1)$, condition [{\bf Pea}] ensures that
\begin{equation}\label{eq:bound_mean_nab_Xeps}
\int_t^T \Eb[| \nabla_x F^{(\eps)}(s, X^{\eps,t,v,x}_s ) | ] ds \leq C_T, \qquad (t,v,x)\in [0,T]\times\R^{2d}, \quad \eps>0,
\end{equation}
with $X^{\eps,t,v,x}_s$ denoting the second component of the solution to the regularized version of \eqref{eq:sde}-\eqref{eq:sde_x}, starting at time $t$ from $(v,x)$. This is a direct consequence of the density estimates in \cite[Theorem 1.1 and Remark 1.3]{MR4554678} (see also {\eqref{eq:density_upper_bound_1dim} in} Proposition \ref{prop:density_bound} below), namely the density of $X^{\eps,t,v,x}_s$ is bounded by the Gaussian density appearing in \eqref{eq:moment_estimate}.
%
\end{remark}

We are now in position to present the main result.
\begin{theorem}\label{th:main}
Let Hypothesis \emph{[{\bf Coe}]} be in force. Let also $\alpha\in (1,2]$ and 
assume
\begin{equation}
\mu, \sigma \in L^\infty_TC^\g_b, \qquad F \in L^\infty_TC^{\frac{1+\g}{1+\alpha}},
\end{equation}
for some 
$\gamma\in(0,1)$. 
Finally, let one of the following additional assumptions hold: 
\begin{itemize}
\item[(i)] the functions $\sigma, \mu$ and $F$ are scalar-valued, i.e. $d=1$;
\item[(ii)] Hypothesis  \emph{[{\bf Pea}]} is in force, and the process $L^{(\alpha)}$ is a {$d$-dimensional} Brownian motion ($\a=2$), {$d\ge 1 $}.
\end{itemize}
If Eq. \eqref{eq:sde} is strongly well-posed, then 
so is Eq. \eqref{eq:sde_x}, in the sense that strong existence and pathwise uniqueness hold.
\end{theorem}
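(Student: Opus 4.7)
The plan is to reduce pathwise uniqueness of the $X$ component to a Gronwall-type estimate on a suitably transformed process, via a Zvonkin-style change of variables adapted to the degenerate kinetic structure. Weak existence for the pair $(V,X)$ is provided by the weak well-posedness result recalled above, so by the Yamada--Watanabe principle and the assumed strong well-posedness of \eqref{eq:sde} it suffices to establish pathwise uniqueness for $X$. I fix two solutions $(V,X^{1})$ and $(V,X^{2})$ with identical initial datum (whose $V$ components agree by strong uniqueness of \eqref{eq:sde}) and aim at proving $X^{1}\equiv X^{2}$.

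The auxiliary object is the classical solution $u=u(t,v,x)$ of the backward PDE
\begin{equation*}
\partial_t u + \mathcal{A}_V u + \langle v+F(t,x),\nabla_x u\rangle - \lambda u = -F(t,x), \qquad u(T,\cdot,\cdot)=0,
\end{equation*}
where $\mathcal{A}_V$ denotes the infinitesimal generator of the $V$ equation. Anisotropic kinetic Schauder estimates, built on the natural scaling $v\sim s^{1/\alpha}$, $x\sim s^{(1+\alpha)/\alpha}$, convert the assumed regularity $F\in L^{\infty}_{T}C^{(1+\gamma)/(1+\alpha)}$, $\mu,\sigma\in L^{\infty}_{T}C^{\gamma}_{b}$, into a H\"older norm of order strictly above one for $\nabla_x u$. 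Choosing $\lambda$ large enough yields $\|\nabla_x u\|_{\infty}\le 1/2$, so that $\Psi_t(v,x):=(v,x+u(t,v,x))$ is a bi-Lipschitz $\R^{2d}$-diffeomorphism. Applying It\^o's formula to $Y^{i}_{t}:=X^{i}_{t}+u(t,V_t,X^{i}_t)$ cancels the rough drift $F$ via the PDE and produces, up to a compensated jump remainder when $\alpha<2$,
\begin{equation*}
dY^{i}_{t}=V_{t}\,dt+\lambda\,u(t,V_{t},X^{i}_{t})\,dt+\nabla_v u(t,V_{t-},X^{i}_{t})\,\sigma(t,V_{t-})\,dL^{(\alpha)}_t,
\end{equation*}
whose coefficients are amenable to Lipschitz-type increment bounds in $X^i$.

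For case (ii), the available regularity of $F$ is too low to justify the It\^o step directly, so I would first run the argument with the mollified coefficient $F^{(\eps)}$ of Notation \ref{not:moll} and the associated smooth solution $u^{(\eps)}$. Subtracting the identities for $Y^{1,\eps}$ and $Y^{2,\eps}$, taking expectations, and using the bi-Lipschitz property of $\Psi_t$ leads, up to technical steps, to an inequality of the schematic form
\begin{equation*}
\mathbb{E}|X^{1,\eps}_{t}-X^{2,\eps}_{t}|\le C\int_0^t\bigl(1+\mathbb{E}|\nabla_x F^{(\eps)}(s,X^{1,\eps}_s)|+\mathbb{E}|\nabla_x F^{(\eps)}(s,X^{2,\eps}_s)|\bigr)\,\mathbb{E}|X^{1,\eps}_{s}-X^{2,\eps}_{s}|\,ds,
\end{equation*}
whose $\nabla_x F^{(\eps)}$ integrands are not a priori bounded as $\eps\downarrow 0$. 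Hypothesis \emph{[{\bf Pea}]}, through the moment bound of Remark \ref{rem:bound_mean_nab_Xeps}, is exactly what is required to control these terms uniformly in $\eps$; a stochastic Gronwall argument then closes the estimate and passage to the limit $\eps\downarrow 0$ delivers $X^{1}\equiv X^{2}$.

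For case (i), dimension one allows an alternative that bypasses \emph{[{\bf Pea}]}: a second-order Zvonkin/PDE transform combined with a Watanabe--Yamada device. The sharper Schauder bounds available in one dimension give H\"older regularity in $x$ for the diffusion coefficient $\partial_v u\cdot\sigma$ of the transformed equation for $Y^i$. Applying It\^o's formula to $\rho_n(Y^{1}_t-Y^{2}_t)$, where $\rho_n:\R\to[0,\infty)$ is the Watanabe--Yamada family of smooth approximations of $|\cdot|$ with $\rho_n''\to 2\delta_0$, produces a quadratic variation term which vanishes as $n\to\infty$ by virtue of that H\"older regularity, mirroring the classical Yamada--Watanabe argument for scalar SDEs with H\"older diffusion, while the drift contributions are controlled by Gronwall. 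The hard part in both cases will be to push the kinetic Schauder estimates down to the sharp H\"older exponent $(1+\gamma)/(1+\alpha)$, which saturates the weak uniqueness threshold and leaves no slack for a purely perturbative Zvonkin argument; it is precisely this saturation that makes either \emph{[{\bf Pea}]} in case (ii), or the Watanabe--Yamada refinement in case (i), indispensable.
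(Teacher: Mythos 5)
Your treatment of case (i) is substantively the same as the paper's: a second-order Zvonkin transform based on \eqref{eq:cauchy_zvonkin_gen}, Schauder estimates yielding $\frac12$-H\"older regularity in $x$ for $\partial_v u$, followed by a Watanabe--Yamada argument applied to the difference of the transformed processes. The only imprecisions are cosmetic: the paper also introduces a spatial cut-off $F_m=F\chi_m$ with associated stopping time $\tau_m$ to make the Schauder estimates applicable to a bounded source, and the approximating functions $\varphi_n$ in Lemma~\ref{lem:watanabe} are not ``$\rho_n''\to 2\delta_0$'' mollifiers but a specific family with $\varphi_n''(x)\le \frac{1}{n|x|}\mathds{1}_{(a_n,a_{n-1})}(x)$, designed precisely to pair with the $|x-x'|^{1/2}$ increment bound \eqref{eq:hold_u}; this balance is what makes the quadratic variation term vanish, not any weak-$*$ convergence of $\varphi_n''$ to $\delta_0$.

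For case (ii), however, your plan diverges from the paper's and the key step does not go through as sketched. You propose to keep the second-order kinetic Zvonkin PDE (with mollified source $F^{(\eps)}$) and close via a Gronwall inequality of the form
\begin{equation*}
\Eb|X^{1,\eps}_t-X^{2,\eps}_t|\le C\int_0^t\bigl(1+\Eb|\nabla_x F^{(\eps)}(s,X^{1,\eps}_s)|+\Eb|\nabla_x F^{(\eps)}(s,X^{2,\eps}_s)|\bigr)\,\Eb|X^{1,\eps}_s-X^{2,\eps}_s|\,ds.
\end{equation*}
There is a genuine gap here. First, after taking expectation, the natural term is $\Eb\bigl[|\nabla_x F^{(\eps)}(s,X^i_s)|\,|X^1_s-X^2_s|\bigr]$, which does not factor into a product of two expectations without either independence or an $L^\infty$-bound on $\nabla_x F^{(\eps)}$ (which is exactly what is lacking); an $L^p$--$L^q$ H\"older splitting would instead force a higher moment of $\nabla_x F^{(\eps)}$, which Hypothesis~$[\mathbf{Pea}]$ does not provide (it is an $L^1$-type condition in time). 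Second, and more fundamentally, the second-order Zvonkin transform produces a stochastic integral term $\int\nabla_v u\,\sigma\,dW$ in the transformed dynamics, and in $d\ge 2$ one cannot invoke the scalar Watanabe--Yamada device to neutralize it; controlling it by BDG reintroduces exactly the H\"older-gap obstruction the theorem is meant to overcome.

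The paper's resolution of case (ii) is a genuinely different, and essential, idea: a \emph{first-order} Zvonkin transform. Since the $X$-equation is an ODE with random forcing, one solves the random transport equation \eqref{eq:transport_random_eps} along the realized trajectory $t\mapsto V_t(\omega)$ (with mollified coefficient $F^{(\eps)}$), and then applies a purely pathwise chain rule (Remark~\ref{rem:Ito}) to $u_\eps(t,X_t)$. There is no stochastic integral and no Gronwall: the resulting identity \eqref{eq:u_key_prop_eps} writes $X_\tau-X'_\tau$ directly as integrals of $\nabla_x u_\eps\cdot(F^{(\eps)}-F)$ plus $(F-F^{(\eps)})$-terms, all of which vanish as $\eps\to 0^+$ once one has the uniform-in-$\eps$ $L^q$-bound on $\nabla_x u_\eps(t,X_t)$ of Proposition~\ref{lemm:estim_transp}. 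Hypothesis~$[\mathbf{Pea}]$ enters precisely at that point, through the characteristics representation $\nabla_x u_\eps=I_d-\nabla_x X^{\eps,t,x}_\tau$ and the Gronwall/Gaussian-density bound on $\exp\bigl(\int_t^\tau|\nabla F^{(\eps)}(s,X^{\eps,t,x}_s)|\,ds\bigr)$ in Lemma~\ref{lem:bound_grad_exp} -- not through a Gronwall iteration on the difference of two solutions.
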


\begin{remark}
In Theorem \ref{th:main}, strong well-posedness of the autonomous SDE \eqref{eq:sde} is taken by assumption. Precise state-of-the-art conditions on $\mu$ and $\sigma$ for this, also in relation to $d$ and $\alpha$, are reported in Theorem \ref{th:well_posedness_autonomous} below. We stress that the proof of Theorem \ref{th:main} (well-posedness of Eq. \eqref{eq:sde_x}) does not rely on these additional hypotheses.
\end{remark}

{\begin{remark}\label{rem:mu_reg}
In Theorem \ref{th:main}-(ii), the regularity assumption on the drift coefficient $\mu$ could be relaxed, 
for the basic estimates in \cite{MR4554678}, which we rely on in the proof, hold when $\mu$ is only measurable and with sub-linear growth. However, the focus in this work is mainly on the regularity of the coefficient $F$, rather than of $\mu$. Therefore, we prefer the current statement to promote ease of reading.
\end{remark}}

Theorem \ref{th:main} closes the gap between the two existing H\"older thresholds, namely $\gamma>0$ and $\gamma> \alpha / 2$ for weak and pathwise uniqueness of \eqref{eq:sde}-\eqref{eq:sde_x}, respectively, in two different settings.

\vspace{2pt}

\emph{Setting (i)}. In dimension one, the gap is completely closed, when the driving noise is a symmetric $\alpha$-stable Levy process with $\alpha \in (1,2]$. The case of Brownian noise is thus included. The technique we employ to prove pathwise uniqueness comprises two main steps: first, we apply a Zvonkin-type deterministic transform ({also known as} It\^o-Tanaka trick) to a given solution $(V_t,X_t)$, in order to re-write the dynamics of $X_t$ without the \emph{irregular} drift function $F$. Second, we employ a technique introduced by Ikeda-Watanabe (\cite{ikeda2014stochastic}), which allows to reduce the regularity requirements on the (new) noise coefficient in order to conclude with a Gronwall-type argument. This step is performed by designing a suitable approximation of the absolute value function {and is therefore very dimension dependent}. 

In practice, with the first step one reduces the pathwise uniqueness problem to establishing suitable regularity estimates for the solution to the Zvonkin-type PDE
\begin{equation}\label{eq:cauchy_zvonkin}
\begin{cases}
\big[ \partial_t + \Lc_v + (v + F(t,x))\partial_x - \lambda \big] u(t,v,x) = -F(t,x) , \quad (t,v,x)\in(0,T)\times \R^{2},\\
 u_{\lambda} (T,\cdot,\cdot) \equiv 0,
\end{cases}
\end{equation} 
for $\lambda>0$, where $\Lc_v$ is the generator of the (autonomous) component $V$. 

This step alone is not sufficient to prove pathwise uniqueness for $\gamma\in(0,1)$. Indeed, the standard way to proceed prescribes to re-parametrize the dynamics of two solutions $X,X'$ in terms of $u_{\lambda}$, and then apply Burkholder-Davis-Gundy-type inequalities 
to the process $|X_t-X_t'|^2$. Following \cite{MR4124429}, one finds that the regularity estimates on $u_{\lambda}$ needed to successfully carry out a Gronwall-type argument require $\gamma > \alpha/2$.

With the second step, we reduce the regularity requirements on $u_{\lambda}$. 
When $d=1$, alternative to applying BGD-type estimtes, we approximate $|X_t - X'_t|$ with a suitable smooth approximation $\varphi_n (X_t - X'_t)$, which allows us to 
run the Gronwall argument so long as $x\mapsto \nabla_v u_{\lambda} (t,v,x)$ is $1/2$-H\"older continuous. This is true in light of the estimate (see Proposition \ref{prop:Schauder} below)
\begin{equation}\label{eq:holder_gradient}
[ x\mapsto \nabla_v u_{\lambda} (t,v,x) ] \in C^{\frac{\alpha+\gamma}{\alpha+1}}_b \quad \text{uniformly w.r.t. } (t,v)\in (0,T)\times \R,
\end{equation}
recalling that $\alpha\in (1,2]$. 


\emph{The riddle of the exceeding regularity:} interestingly, by \eqref{eq:holder_gradient} there seems to be ``too much" regularity on $\nabla_v u_{\lambda}$, as 
\begin{equation}
\frac{\alpha+\gamma}{\alpha+1} > \frac{\alpha}{\alpha+1}>1/2, \qquad \gamma\in(0,1),\ \alpha\in(1,2].
\end{equation}
In order not to ``waste" regularity, one would be tempted to reduce the requirement $F\in L^\infty_TC^{\frac{1+\g}{\alpha+1}}$ by taking $\gamma<0$. However, this is not possible because \eqref{eq:holder_gradient} only holds for $\gamma\in(0,1)$. Note that the validity of \eqref{eq:holder_gradient} for $\gamma<0$, which would lead to pathwise uniqueness for \eqref{eq:sde_x}, would in turn contradict the counter-example in \cite{MR3808994} to weak-uniqueness. Therefore, the reason for the exceeding regularity in \eqref{eq:holder_gradient} must be sought in probabilistic terms. One possibility is that we are not taking fully advantage of the Ikeda-Watanabe technique, which could be adapted, in the kinetic case, to cover higher-dimensional cases. For instance, in \eqref{eq:sde_non} the $V$ component is no longer autonomous, and thus $(V,X)$ solves a $2$-dimensional fully coupled system (here $d=1$). This case cannot be handled with a smooth approximation of the $2$-dimensional Euclidean norm, but the approximation of different (non-Euclidean) norms could be attempted in order to take advantage of the maximal regularity of $u_{\lambda}$. We aim to investigate this direction in future research.

\vspace{2pt}

\emph{Setting (ii)}. For a general dimension $d$, in the purely diffusive case, the gap is closed so long as the drift coefficient $F$ satisfies an additional Peano-like assumption. Indeed, the regularity required on $F(t,v,x)$ along the $x$-variable is $(1+\gamma)/3$-Holder continuous, with $\gamma\in(0,1)$, which is the same needed for weak well-posedness, but Hypothesis [{\bf Pea}] is also required. The reason for the latter can be motivated by the {proof} technique we utilize. 

The idea is again getting rid of the irregular drift $F$ by means of a change of variables in terms of a function that possesses suitable regularity properties. Namely, we consider the solution to the following (regularized) transport equation with random coefficients
\begin{equation}\label{eq:transport_random}
\begin{cases}
\partial_t u_{\eps} (t,x) +  \nabla_x u_{\eps} (t,x) (F^{(\eps)}( t, x ) + V_t)    = F^{(\eps)}(t, x)     ,  \qquad  t<T  , \\
u_{\eps}(T,x) = 0.
\end{cases}
\end{equation}
Applying ($1$st order) It\^o formula then yields
\begin{align} 
X_T - X'_T =& \int_0^T  \nabla_x u_{\eps} (t,X_t)  ( F^{(\eps)}( t, X_t )-F(t, X_t ) ) dt \\&+ \int_0^T  \nabla_x u_{\eps} (t,X'_t) ( F^{(\eps)}( t, X'_t )-F(t, X'_t ) ) dt.
\label{eq:u_key_prop_eps_bis}
\end{align}
Pathwise uniqueness can then be obtained, passing to the limit as $\eps\to 0^+$, by establishing a uniform $L^p$-bound on $\nabla_x u_{\eps} (t,X_t)$. To this end, we first represent $\nabla_x u_{\eps}$ in terms of the characteristic curves of the random vector-field in right-hand side of \eqref{eq:transport_random}, then apply a probabilistic argument that strongly relies on the bound \eqref{eq:bound_mean_nab_Xeps}, which can be established under [{\bf Pea}] (see Remark \ref{rem:bound_mean_nab_Xeps} above).

Note that the function $u_{\eps}$ can be understood as a \emph{first-order Zvonkin transform}. Instead of considering the second-order PDE \eqref{eq:cauchy_zvonkin}, associated to the stochastic system \eqref{eq:sde}-\eqref{eq:sde_x}, we consider the first-order PDE in \eqref{eq:transport_random}, associated to the single equation for $X$, understood as an ordinary differential equation with random coefficients.  As a result, there is no diffusion term in the dynamics of the transformed process $u_{\eps}(t, X_t)$, and pathwise uniqueness can be inferred only with a suitable bound on the gradient $\nabla_x u_{\eps}$, with no further regularity on the latter. We point out that the transformed dynamics are derived here by means of a pathwise It\^o formula. Probabilistic arguments intervene only to establish an $L^p$-bound on $\nabla_x u_{\eps} (t,X_t)$, as we strongly rely on uniform Gaussian upper bounds for the transition density of $X$.

This technique can be, in principle, extended to more general driving noises. For instance, in the case $\alpha <2$, the PDE for $u_{\eps}$ and the representation \eqref{eq:u_key_prop_eps_bis} would remain basically unchanged. However, to the best of our knowledege, we are not aware of uniform upper bounds on the density of $X$, which are needed to derive the $L^p$-bound on $\nabla_x u_{\eps} (t,X_t)$, in full generality but only under some dimensional constraints (see Huang and Menozzi \cite{MR3573301}).

\emph{Pathwise uniqueness for ``{truly}" H\"older functions:} Our result in this setting closes the gap on the threshold regularity between weak and pathwise uniqueness, only for a specific class of drift functions $F$. Namely, we are able to check Hypothesis [{\bf Pea}] for a family of functions whose gradient is continuous except for a countable number of singularities that are isolated enough, namely functions that behave locally like \eqref{eq:ex_peano}. If the singularities accumulate around a certain point the argument may break down (see Section \ref{sec:peano}). Therefore, this technique seems not to be suited to deal with H\"older functions $F$ whose gradient is not locally bounded, say distributional. The question of whether pathwise uniqueness for kinetic-type systems \eqref{eq:sde}-\eqref{eq:sde_x} holds or not, for a generic $F(t, \cdot) \in C^{\frac{1+\g}{1+\alpha}}$, remains open, although our result shows that possible counterexamples are not to be sought, at least in the diffusive framework (i.e. $\alpha=2$), among Peano-like models. 

\vspace{2pt} 

{The rest of the paper is organized as follows. Section \ref{sec:prel} contains some preliminary results, mainly needed for the proof of Theorem \ref{th:main}. In particular, in Section \ref{sec:autonomous} we recall some state-of-the art results for the strong well-posedness of the autonomous SDE \eqref{eq:sde} in the H\"older setting. In Section \ref{sec:density_estimates}, we recall a Gaussian upper bound (from \cite{MR4554678}) on the transition density of the (diffusive) system \eqref{eq:sde}-\eqref{eq:sde_x}, which is needed to prove Theorem \ref{th:main}-(ii). In Section \ref{sec:schauder} we derive, expanding on the Schauder estimates in \cite{MR4124429}, sharp regularity estimates for the gradient of the solution to \eqref{eq:cauchy_zvonkin}-like Kolmogorov equations, needed for the proof of Theorem \ref{th:main}-(i). Section \ref{sec:proof_th_one} is devoted to proving Theorem \ref{th:main}-(i). In particular, the key PDE transformations and their relevant regularity estimates are laid out in Section \ref{sec:regul_estimates}. Section \ref{sec:proof_th_one_core} contains the probabilistic core of the proof, and finally Section \ref{sec:proof_th_one_prop} contains the proof of a technical proposition. Section \ref{sec:multi_dim} focuses on Theorem \ref{th:main}-(ii). In particular, in Section \ref{sec:multi_dim_prelim} we integrate the statement with an analysis of the hypothesis [{\bf Pea}], with examples and counterexamples. Section \ref{sec:multi_dim_core} contains the core of the proof. The proof of a technical estimate, namely a uniform $L^p$-bound on $\nabla_x u_{\eps} (t,X_t)$ in \eqref{eq:transport_random}, is postponed until Section \ref{sec:proof_lemmas}.
} 

%

\section{Preliminaries}\label{sec:prel}

\subsection{SDEs driven by $\alpha$-stable noise
}
\label{sec:autonomous}

In this section we consider the autonomous equation 
\eqref{eq:sde}, which we re-write here for the reader's convenience:
\begin{equation}\label{eq:autonomous_SDE_stable}
dV_{t}=\m(t,V_{t})dt+\s(t,V_{t-})dL^{(\alpha)}_{t},
\end{equation}
where $(L^{(\alpha)}_t)_{t\in[0,T]}$ is a $d$-dimensional symmetric (i.e. rotationally invariant) $\alpha$-stable L\'evy process with $\alpha \in (1,2]$, and $\sigma:[0,T] \times \R^d \to \mathcal{M}^{d\times d}$, $\mu,F:[0,T] \times \R^d \to \R^d$ are measurable functions.
%

 We have the following result for the strong well-posedness of \eqref{eq:autonomous_SDE_stable}.
\begin{theorem}\label{th:well_posedness_autonomous}
 Let {Hypothesis \emph{[{\bf Coe}]} be in force,} $\alpha\in (1,2]$ and one of the following sets of assumptions be satisfied: 
\begin{itemize}
\item[(i)]  the functions $\sigma, \mu$ are scalar-valued, i.e. $d=1$, plus one of the following:
\begin{itemize}
\item[(a)] $\sigma \in L^\infty_TC^{1}_b$ and $\mu \in L^\infty_TC^{1-\frac{\alpha}{2}+\eta}_b$ for some $\eta>0$;
\item[(b)] the noise is additive ($\sigma(t,v)\equiv \sigma$) and $\mu \in L^\infty_TC^{\gamma}_b$ for some $\gamma\in (0,1)$; 
\item[(c)] $\sigma \in L^\infty_TC^{\frac{1}{\alpha}}_b$ and the drift is constant ($\mu(t,v) \equiv \mu$);
\item[(d)] process $L^{(\alpha)}$ is a Brownian motion ($\a=2$), $\sigma \in L^\infty_TC^{\frac{1}{2}}_b$ and $\mu \in L^\infty_TC^{\gamma}_b$ for some $\gamma\in (0,1)$;
\end{itemize}
\item[(ii)] process $L^{(\alpha)}$ is a Brownian motion ($\a=2$), $\sigma \in L^\infty_TC^{1}_b$ and $\mu \in L^\infty_TC^{\gamma}_b$ for some $\gamma\in (0,1)$.
\end{itemize}
Then the SDE \eqref{eq:autonomous_SDE_stable} is strongly well-posed, in the sense that strong existence and pathwise uniqueness hold.
\end{theorem}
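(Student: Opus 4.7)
Theorem \ref{th:well_posedness_autonomous} is a compendium of existing strong well-posedness results for autonomous SDEs driven by symmetric $\alpha$-stable noise; the plan is simply to check, in each of the five sub-cases, that the assumptions fall within the hypotheses of a known pathwise uniqueness theorem. Weak existence is in every case a standard consequence of the boundedness and Hölder regularity of the coefficients together with the non-degeneracy $[\textbf{Coe}]$ (tightness plus the solvability of the associated martingale problem), so by Yamada-Watanabe it is enough to verify pathwise uniqueness.

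I would treat the Brownian cases first, as they are the simplest. Case (ii) is the classical multi-dimensional Zvonkin-Veretennikov result: parabolic Schauder estimates for the resolvent equation $\partial_t u + \tfrac12 \mathrm{tr}(\sigma\sigma^* \nabla^2 u) + \mu\cdot\nabla u - \lambda u = \mu$ produce a $C^{2+\gamma}_b$ solution with $\nabla u$ uniformly small for $\lambda$ large, and the change of variables $v \mapsto v + u(t,v)$ conjugates the SDE to one with Lipschitz drift and $C^1$ diffusion, for which classical pathwise uniqueness applies. Case (i)(d) is its scalar variant with a Yamada-Watanabe local-time step replacing the Lipschitz hypothesis on $\sigma$: after the Zvonkin reduction the diffusion is still $C^{1/2}$, which is precisely the Yamada-Watanabe threshold for scalar Brownian SDEs. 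For the stable-driven scalar cases (i)(a)-(c), the same architecture is adapted with fractional Schauder estimates for the non-local generator $\partial_t + \Lc_v$ (in the spirit of Proposition \ref{prop:Schauder}). Case (i)(c) (driftless, $\sigma \in C^{1/\alpha}_b$) is directly the Komatsu-Bass scalar stable analogue of Yamada-Watanabe. Case (i)(b) (additive noise) is covered by Priola-type strong uniqueness for $\alpha$-stable SDEs with Hölder drift, where sub-criticality is automatic for $\alpha>1$. Finally, case (i)(a) combines both ingredients: the precise Hölder exponent $1-\alpha/2+\eta$ on $\mu$ is exactly what is needed so that the solution to the Zvonkin-type fractional Kolmogorov equation enjoys enough spatial smoothness for the change of variables $v\mapsto v+u(t,v)$ to transform the original SDE into one whose new diffusion coefficient still lies in the Komatsu-Bass Hölder class.

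The main technical subtlety lies in case (i)(a): one must track carefully how the Hölder regularity of the diffusion coefficient is affected by the Zvonkin change of variables, since it must remain inside the scalar stable pathwise uniqueness regime; the threshold $1-\alpha/2+\eta$ is essentially optimal for this scheme. Elsewhere the proof reduces to a direct appeal to existing literature (Veretennikov, Zvonkin, Komatsu, Bass, Priola and subsequent developments), and no original argument is required, the genuine novelty of the paper residing in Theorem \ref{th:main}.
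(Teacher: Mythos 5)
Your proposal is correct and takes essentially the same approach as the paper: Theorem~\ref{th:well_posedness_autonomous} is proved in the paper simply by citing an existing strong well-posedness result for each sub-case (Zvonkin's theorem for (i)(d) and (ii), Bass's scalar stable result for (i)(c), Tanaka--Tsuchiya--Watanabe for the additive case (i)(b), and the corresponding reference for (i)(a)), with no proof reproduced. Your sketches of the underlying Zvonkin/Yamada--Watanabe and Komatsu--Bass machinery are accurate in spirit, though the paper does not carry them out.
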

For a proof to Theorem \ref{th:well_posedness_autonomous} we refer to: 
\cite[Theorem 1.1]{MR4328678} in the case (i)-(a), 
\cite[Theorem 3.1]{MR368146} in the case (i)-(b), 
\cite[Theorem 1.1]{MR1971592} in the case (i)-(c), 
\cite[Theorem 4]{MR0336813} in the case (i)-(d) 
and (ii).
\begin{remark}[One-dimensional case] \label{rem:one_d}
To the best of our knowledge, the assumptions (i)-(a-d) in Theorem \ref{th:well_posedness_autonomous} are the {weakest} available in the literature in the one-dimensional H\"older setting. Note that, in the diffusive case, {H\"older continuity is the only regularity requirement on the drift coefficient $\mu$}, 
and the diffusion coefficient $\sigma$ is only $1/2$-H\"older continuous. On the other hand, in the $\alpha$-stable setting, stronger assumptions are needed on $\mu$ and $\sigma$. 

{We also refer to \cite{bass2001stochastic} and \cite{athreya2020strong} for strong well-posedness results in the one-dimensional setting for distributional drift.} {In particular, the assumption $\mu \in L^\infty_TC^{\gamma}_b$ in (i)-(d) could be weakened by requesting that $b$ is a distribution in a suitable space. However, the Schauder estimates of Proposition \ref{prop:Schauder}, which are necessary to prove our main result (Theorem \ref{th:main}-(i)), are not yet available for a distributional $\mu$. Therefore, although future extensions in this direction seem possible, in this work we prefer to consider only a H\"older setting.}
\end{remark}

\subsection{Density estimates for diffusive kinetic-type systems}\label{sec:density_estimates}

We recall an upper bound on the density of the system \eqref{eq:sde}-\eqref{eq:sde_x} with $\alpha=2$, namely when $L^{(\alpha)}$ is a $d$-dimensional Brownian motion, which is needed to prove Theorem \ref{th:main}-(ii). For $\eps>0$, consider the regularized system
\begin{equation}\label{eq:sde_reg}
\begin{cases}
  dV_{t}=\m \big(t,V_{t}\big)dt+\s \big(t,V_{t}\big) dW_t, 
   \\
  dX^{\eps}_{t}=\big(V_{t} + F^{(\eps)}(t,X^{\eps}_t) \big) dt, 
\end{cases}
\end{equation}
where 
$F^{(\eps)}$ is the smooth approximation of 
$F$ defined through \eqref{eq:moll}. We also denote by $\theta^{(\eps)}_{s,t}$, $0\leq t\leq s \leq T$, 
a forward flow associated to the drift in \eqref{eq:sde_reg}, namely
\begin{align}
&\frac{d}{dt}\theta^{(\eps)}_{s,t}(v,x) = \big( \m
\big(s,[\theta^{(\eps)}_{s,t}(v,x) ]_1\big), [\theta^{(\eps)}_{s,t}(v,x) ]_1 + F^{(\eps)}(s,[\theta^{(\eps)}_{s,t}(v,x) ]_2 \big)\big), \\
&\theta^{(\eps)}_{t,t}(v,x) = (v,x),
\end{align}
where for $z\in \R^{2d}$ the notation $[{z}]_i,\ i\in \{1,2\} $ stands for the $d$ first components ($i=1$) or the last $d$ components ($i=2$) of $z$.
Finally, for any $\lambda>0$ we define the $2d$-dimensional Gaussian density
\begin{equation}
g_{\lambda}(t,v,x): = \frac{1}{(2\pi \lambda)^{{d}}  t^{2d}} \exp\Big(  -\frac{t^{-1}|v|^2 + t^{-3}|x|^2 }{2 \lambda} \Big), \qquad (t,v,x)\in (0,\infty)\times \R^{2d}.
\end{equation}
The following result directly follows from \cite[Theorem 1.1 and Remark 1.3]{MR4554678}.
\begin{proposition}\label{prop:density_bound}
Let $\alpha = 2$ and let Hypothesis \emph{[{\bf Coe}]} be in force. Assume
\begin{equation}
\mu,\sigma \in L^\infty_TC^\g_b, \qquad F \in L^\infty_TC^{\frac{1+\g}{3}}, 
\end{equation}
for some $\gamma\in(0,1)$. Then the system \eqref{eq:sde_reg} is weakly well-posed for any $\eps\geq 0$. 

Furthermore, for any $\eps>0$, the following upper-bound on its transition density {$p^{(\eps)}
$} holds: 
\begin{align}\label{eq:density_upper_bound_2dim}
p_{(V,{X^\eps})}^{(\eps)}(t,v,x;s,\eta,\xi) \leq C  g_\lambda\big(s-t,  (\eta,\xi) - \theta^{(\eps)}_{s,t}(v,x) \big), \\
0\leq t <s\leq T,\ (v,x), (\eta,\xi)\in\R^{2d},
\end{align}
with $C,\lambda$ positive constants, independent on $\eps$.
In particular, we have
\begin{equation}\label{eq:density_upper_bound_1dim}
p^{(\eps)}_{{X^\eps}}(t,v,x;s,\xi) \leq C\, \Gamma\big( \lambda (s-t)^3, \xi - [ \theta^{(\eps)}_{s,t}(v,x)]_{2} \big), 
\end{equation}
where 
{
\begin{equation}\label{eq:def_marginal}
p^{(\eps)}_{{X^\eps}}(t,v,x;s,\xi) := \int_{\mathbb{R}^d} p^{(\eps)}_{(V,X^{\eps})}(t, v, x; s, \eta, \xi)d\eta,
\end{equation}
and where} $\G$ is the standard $d$-dimensional Gaussian density in \eqref{eq:standard_gauss}.
\end{proposition}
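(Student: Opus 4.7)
The plan is to reduce the statement to a direct application of \cite[Theorem 1.1 and Remark 1.3]{MR4554678}, with the main work lying in verifying that the hypotheses of that result hold for the regularized system \eqref{eq:sde_reg} with constants independent of $\eps$. For weak well-posedness at $\eps>0$, the coefficient $F^{(\eps)}$ is $C^\infty$ in $x$ with all derivatives uniformly bounded, so one invokes classical Lipschitz theory together with the autonomy of the $V$-equation. For $\eps=0$, weak well-posedness follows from \cite{MR4591369} in the pure-jump case and from \cite{MR4358660} in the diffusive case $\alpha=2$, whose H\"older threshold $(1+\gamma)/(1+\alpha)$ specialises to $(1+\gamma)/3$.

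The key technical point for the uniform density bound is that convolution preserves H\"older semi-norms: one has $|F^{(\eps)}(t,\cdot)|_{C^{(1+\gamma)/3}} \leq |F(t,\cdot)|_{C^{(1+\gamma)/3}}$, and the pointwise value $|F^{(\eps)}(t,0)|$ is controlled by $|F(t,0)|$ up to an $O(\eps^{(1+\gamma)/3})$ correction. Hence the full $L^\infty_T C^{(1+\gamma)/3}$-norm of $F^{(\eps)}$ is uniformly bounded in $\eps$ ranging in a bounded set. Since $(\mu,\sigma)$ and $\Lambda$ are unchanged, every structural hypothesis required by \cite[Theorem 1.1]{MR4554678} holds for the triple $(\mu,\sigma,F^{(\eps)})$ with constants tracked solely by the norms of the original coefficients. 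Applying the theorem then produces the Aronson-type Gaussian bound \eqref{eq:density_upper_bound_2dim}, centred on the deterministic flow $\theta^{(\eps)}_{s,t}(v,x)$ with the correct anisotropic kinetic scaling $(s-t,(s-t)^3)$ in the velocity/position components; Remark 1.3 of the same reference is what guarantees that $C,\lambda$ depend only on the uniform bounds just discussed, hence can be chosen independent of $\eps$.

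Finally, the marginal bound \eqref{eq:density_upper_bound_1dim} follows by integrating \eqref{eq:density_upper_bound_2dim} in $\eta$: the Gaussian $g_\lambda$ factorises as a product of two independent densities in the velocity and position coordinates, so the $\eta$-integration yields $1$ up to normalisation and leaves the desired spatial marginal $\Gamma(\lambda(s-t)^3,\xi - [\theta^{(\eps)}_{s,t}(v,x)]_2)$. The only non-routine aspect of the overall argument is the careful propagation of $\eps$-uniformity through the constants of \cite{MR4554678}, which reduces to a single inspection of that proof combined with Remark 1.3; I expect this bookkeeping to be the main---though still essentially mechanical---obstacle.
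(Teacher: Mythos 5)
Your proposal is correct and takes essentially the same route as the paper, which simply states that the result ``directly follows from \cite[Theorem 1.1 and Remark 1.3]{MR4554678}''; your elaboration of the $\eps$-uniformity of the constants (via the stability of H\"older semi-norms under mollification) and of the $\eta$-marginalisation through the product structure $g_\lambda(t,v,x)=\Gamma(\lambda t,v)\,\Gamma(\lambda t^3,x)$ fills in exactly the bookkeeping the paper leaves implicit. Two inessential inaccuracies: the citation of the pure-jump reference \cite{MR4591369} is moot since the proposition assumes $\alpha=2$, and the weak well-posedness of the $V$-block for $\eps>0$ cannot rest on ``classical Lipschitz theory'' because $\mu,\sigma\in L^\infty_T C^\gamma_b$ are only H\"older continuous (one should instead invoke the same H\"older-based well-posedness used for $\eps=0$, or Stroock--Varadhan for the elliptic $V$-equation alone).
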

\begin{remark}\label{rem:strong_well_regular}
Under the assumptions of Proposition \ref{prop:density_bound}, for $\eps>0$, pathwise uniqueness for \eqref{eq:sde_reg} also holds as long as it holds for the autonomous component (see Theorem \ref{th:well_posedness_autonomous}). This follows 
by noticing that $F^{(\eps)}(t,x)$ is Lipschitz continuous in $x$, uniformly with respect to $t\in [0,T]$.
\end{remark}
\begin{remark}\label{rem:upper_bounds}
By \cite[Remark 1.3]{MR4554678}, we have that the bounds \eqref{eq:density_upper_bound_2dim}-\eqref{eq:density_upper_bound_1dim} also hold for $\eps =0$, with $\theta^{(0)}_{s,t}$, $0\leq t\leq s \leq T$, being any forward Peano-type flow associated to the drift in \eqref{eq:sde}-\eqref{eq:sde_x}.
\end{remark}

\subsection{Schauder estimates for kinetic-type fractional operators}\label{sec:schauder}

{The results in this section could be proved for $d\geq1$. However, since they will be utilized with $d=1$, we state and prove them here for $d=1$, to promote ease of reading. We} state some regularity estimates for the Cauchy problem 
\begin{equation}\label{eq:cauchy_zvonkin_gen}
\begin{cases}
\big[ \partial_t + \Lc_v + (v + F(t,x))\partial_x - \lambda \big] u(t,v,x) = -f(t,v,x) , \quad (t,v,x)\in(0,T)\times \R^{2},\\
 u (T,\cdot,\cdot) \equiv 0,
\end{cases}
\end{equation} 
where 
\begin{equation}\label{eq:Lcal}
\Lc_v = \frac{\sigma^{\alpha}(t,v)}{2}\, \Delta^{{\alpha}/{2}}_v + \mu(t,v) \partial_v,
\end{equation}
with $\Delta_v^{{\alpha}/{2}}$ denoting the $1$-dimensional fractional Laplacian acting on the variable $v$, namely
\begin{equation}\label{eq:1d_frac_lap}
\Delta_v^{{\alpha}/{2}} g(v) = 
\begin{cases}
 \int_{\R} \big[ g(v+w) - g(v-w) - 2g(v)  \big] |w|^{-1-\alpha} dw , &\quad \alpha\in (1,2),\\
{\partial_{vv} g(v)}, &\quad \alpha = 2.
\end{cases}
\end{equation}
The estimates in Proposition \ref{prop:Schauder} below stem from the recent results in \cite{MR4124429}. We point out that the latter actually hold for a larger class of PDEs, which include the multi-dimensional version of \eqref{eq:cauchy_zvonkin_gen}, i.e. when $(v,x)\in \R^{2d}$ and $\Delta_v^{{\alpha}/{2}}$ is the $d$-dimensional fractional Laplacian in \eqref{eq:gen_lap_frac}. 

To state and prove Proposition \ref{prop:Schauder}, we introduce the H\"older-Zygmund spaces employed in \cite{MR4124429}. For $h\in\R$, denote by $\delta_h$ the first-order difference operator acting on $g:\R\to\R$ as
\begin{equation}
\delta_h g (x) = g(x+h) - g(x),
\end{equation}
and, for a general order $m\in \mathbb{N}$, set
\begin{equation}
\delta^m_h = \underbrace{\delta_h \circ \dots \circ \delta_h}_{\text{$m$ times}}.
\end{equation}
\begin{notation}
Let $\beta>0$. We denote by: 
\begin{itemize}
\item ${\bf C}^{\beta}$ the $\beta$-order H\"older-Zygmund space given by the functions $g:\R\to\R$ such that
\begin{equation}
\| g \|_{{\bf C}^{\beta}} := \sup_{x\in\R}|g(x)| +  \sup_{h,x\in\R, h\neq 0} \big| \delta^{\lfloor \beta \rfloor +1}_h g (x) \big| / |h|^{\beta} < \infty,
\end{equation}
where $\lfloor \beta \rfloor$ denotes the greatest integer less than $\beta$. 
\item ${\bf C}_v^{\beta}$ and ${\bf C}_x^{\beta}$ the space of the functions $g=g(v,x):\R^2\to\R$ such that
\begin{equation}
\| g \|_{{\bf C}_v^{\beta}} := \sup\limits_{x\in\R} \|g(\cdot,x)\|_{{\bf C}^\beta} < \infty \qquad \text{and} \qquad \| g \|_{{\bf C}_x^{\beta}} := \sup\limits_{v\in\R} \|g(v,\cdot)\|_{{\bf C}^\beta} < \infty,
\end{equation}
respectively.
\item $L^\infty_T {\bf C}^\b$ the space of measurable functions $f=f(t,\cdot):[0,T]\to C_b(\R)$ such that
\begin{equation}
\|f\|_{L^\infty_T {\bf C}^\b} :=	 \sup\limits_{t\in[0,T]} \|f(t,\cdot)\|_{{\bf C}^\beta}  < +\infty.
	\end{equation}
\item $L^\infty_T {\bf C}_v^\beta$ and $L^\infty_T {\bf C}_x^\beta$ the spaces of measurable functions $f=f(t,\cdot,\cdot):[0,T]\to C_b(\R^2)$ such that
\begin{equation}
\| f \|_{L^\infty_T{\bf C}_v^{\beta}} := \sup\limits_{t\in[0,T]} \|f(t,\cdot,\cdot)\|_{{\bf C}_v^\beta} < \infty 
\end{equation}
and
\begin{equation}
\| f \|_{L^\infty_T{\bf C}_x^{\beta}} := \sup\limits_{t\in[0,T]} \|f(t,\cdot,\cdot)\|_{{\bf C}_x^\beta} < \infty,
\end{equation}
	respectively.
	\end{itemize}
\end{notation}

\begin{proposition}[Schauder estimates]\label{prop:Schauder}
Let $\alpha\in(1,2]$ and $\lambda\geq0$. Let also Hypothesis \emph{[{\bf Coe}]} be in force and assume
\begin{equation}
\mu,\sigma \in L^\infty_TC^\g_b, \qquad F \in L^\infty_TC^{\frac{1+\g}{1+\alpha}}, \qquad 
f\in L^\infty_T {\bf C}_v^\gamma \cap L^\infty_T {\bf C}_x^{(1+\g)/(1+\alpha)} ,
\end{equation}
for some $\gamma\in(0,1)$. Then, there is a classical solution $u$ to \eqref{eq:cauchy_zvonkin_gen}, namely a continuous function $u=u(t,v,x)$ such that 
\begin{equation}
u(t,v,x) = \int_t^T \big[ \Lc_v u + (v + F(t,x))\partial_x u - \lambda  u + f  \big] (s,v,x) ds, \quad (t,v,x)\in [0,T]\times\R^2,
\end{equation}
for which the following estimate holds:
\begin{equation}\label{eq:schauder}
\|\partial_v u \|_{L^\infty_T{\bf C}_v^{\alpha+\gamma-1} } + 
\|\partial_v u \|_{L^\infty_T {\bf C}_x^{(\alpha+\gamma-{\varepsilon})/(\alpha+1)}} \leq C\big( \| f \|_{L^\infty_T{\bf C}_v^\gamma }  +  \| f \|_{L^\infty_T {\bf C}_x^{(1+\g)/(1+\alpha)}} \big) , 
\end{equation}
for {any $\eps>0$} and for some $C>0$ {independent of $\lambda$}. 
Furthermore, if $\lambda>0$ is large enough we have
\begin{equation}\label{eq:bound_grad_u}
 \| \partial_v u  \|_{L^\infty_T([0,T]\times \R^2)} + \| \partial_x u  \|_{L^\infty_T([0,T]\times \R^2)} 
\leq \frac{1}{2} .
\end{equation}
\end{proposition}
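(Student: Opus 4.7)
The strategy is to apply the kinetic Schauder theory developed in \cite{MR4124429} for fractional Kolmogorov operators of the form $\partial_t + \mathcal{L}_v + G(t,v,x)\partial_x - \lambda$, which in our setting corresponds to the choice $G(t,v,x) = v + F(t,x)$. To produce a classical solution, I would first mollify the coefficients $\mu,\sigma, F$ and the source $f$ in space, obtaining smooth bounded approximations $\mu^\delta, \sigma^\delta, F^\delta, f^\delta$ whose H\"older norms remain controlled by those of the originals. The regularized problem admits a smooth classical solution $u^\delta$ by standard parabolic (or L\'evy-type) theory. Establishing the estimate \eqref{eq:schauder} uniformly in $\delta$ then provides precompactness in suitable spaces, and a diagonal extraction yields a classical solution to the original Cauchy problem satisfying the same bound.

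The core a priori estimate \eqref{eq:schauder} would be derived from \cite{MR4124429}. The $v$-regularity index $\alpha+\gamma-1$ for $\partial_v u$ reflects the standard $\alpha$-gain of the fractional operator $\mathcal{L}_v$ acting on an $L^\infty_T{\bf C}_v^\gamma$ source, while the $x$-regularity index $(\alpha+\gamma-\varepsilon)/(\alpha+1)$ for $\partial_v u$ arises from the intrinsic kinetic scaling $(t,v,x)\mapsto (\kappa^\alpha t, \kappa v, \kappa^{\alpha+1} x)$, which converts a homogeneity-$(\alpha+\gamma)$ quantity in $v$ into a homogeneity-$(\alpha+\gamma)/(\alpha+1)$ one in $x$. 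The small $\varepsilon$-loss is the price for treating the $F(t,x)\partial_x u$ term, whose $x$-H\"older regularity is only $(1+\gamma)/(1+\alpha)$: the plan is to handle this perturbation by freezing $F(t,x)\approx F(t,x_0)$ near a reference point and controlling the error through the H\"older semi-norm of $F$ together with a short-time/interpolation argument, which leaks a tiny fraction of H\"older exponent.

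For \eqref{eq:bound_grad_u}, the plan is to track the $\lambda$-dependence of the Schauder constant. The exponential factor $e^{-\lambda(s-t)}$ built into the Duhamel representation of the semigroup effectively shrinks the time horizon to order $1/\lambda$, and an inspection of the argument shows that the constant $C$ in \eqref{eq:schauder} is dominated by $C_0\lambda^{-\eta}$ for some $\eta>0$. Since $\alpha+\gamma-1>0$, the norm $\|\partial_v u\|_{L^\infty_T{\bf C}_v^{\alpha+\gamma-1}}$ embeds into the sup norm and controls $\|\partial_v u\|_\infty$, while a parallel Schauder-type estimate, also available in \cite{MR4124429}, controls $\|\partial_x u\|_\infty$. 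Taking $\lambda$ large enough then delivers the factor $1/2$. I expect the main obstacle to lie precisely in the careful book-keeping of the $F\partial_x u$ perturbation throughout the Schauder estimate, which is responsible for the $\varepsilon$-loss and simultaneously forces the Duhamel/fixed-point scheme to live in a norm that controls both $\partial_v u$ and $\partial_x u$. Otherwise, the argument is a faithful adaptation of \cite{MR4124429}, and the tracking of $\lambda$ in the constants is standard for Zvonkin-type PDEs.
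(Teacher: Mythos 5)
The proposal has two genuine gaps.

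First, your argument for \eqref{eq:bound_grad_u} rests on the claim that ``an inspection of the argument shows that the constant $C$ in \eqref{eq:schauder} is dominated by $C_0\lambda^{-\eta}$''. This is inconsistent with the statement itself, which asserts $C$ is \emph{independent} of $\lambda$, and it is also what the underlying result \cite[Theorem 6.11]{MR4124429} delivers: the homogeneous H\"older seminorms of the solution at the natural order $\alpha+\gamma$ (in $v$) and $(\alpha+\gamma+1)/(\alpha+1)$ (in $x$) are controlled by a $\lambda$-independent constant, because these are precisely the quantities at the critical parabolic scaling. What \emph{does} decay in $\lambda$ is only the $L^\infty$ norm, through $\|u\|_{L^\infty_T L^\infty} \leq \lambda^{-1}\|f\|_{L^\infty_T L^\infty}$ (i.e.\ \eqref{eq:estimate_zhang_6.11_bis}). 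The paper's route is to interpolate, via Lemma \ref{lemm:interpol_bis}, between the $\lambda$-independent top-order Schauder bound and this decaying sup-norm bound: one first chooses the interpolation weight $\delta$ small, then $\lambda$ large. Your embedding argument $\|\partial_v u\|_{L^\infty_T{\bf C}_v^{\alpha+\gamma-1}}\hookrightarrow \|\partial_v u\|_{\infty}$ alone gives a $\lambda$-independent bound and cannot yield the factor $1/2$.

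Second, for \eqref{eq:schauder} you describe a coefficient-freezing heuristic to justify the $\varepsilon$-loss, but this addresses how one might \emph{re-derive} the Schauder theory of \cite{MR4124429} from scratch rather than how to extract \eqref{eq:schauder} from what is already proven there. The paper takes the mixed estimate \eqref{eq:estimate_zhang_6.11} from \cite[Theorem 6.11]{MR4124429}, which already contains the mixed norm $\|u\|_{L^\infty_T{\bf C}_x^{(1-\rho)(1+\gamma)/(\alpha+1)}{\bf C}_v^{\alpha+\rho\gamma}}$ for a small $\rho>0$, and applies the interpolation Lemma \ref{lemm:interpol} in mixed H\"older--Zygmund spaces — trading $v$-regularity against $x$-regularity between the exponent pairs $\big((1-\rho)(1+\gamma)/(\alpha+1),\,\alpha+\rho\gamma\big)$ and $\big((\alpha+\gamma+1)/(\alpha+1),\,0\big)$ — together with Remark \ref{rem:inequ_mixed} to convert a ${\bf C}_v^{1+\delta}$ control into a bound on $\partial_v u$. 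The $\varepsilon$-loss is thus an artefact of interpolation in mixed norms, not of coefficient freezing. Finally, the mollify-then-pass-to-the-limit step for existence is superfluous: \cite[Theorem 6.11]{MR4124429} directly provides a classical solution satisfying the needed estimates.
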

{Observe that the complete Schauder estimate, which is reported below in \eqref{eq:estimate_zhang_6.11}, actually gives uniqueness of the classical solution satisfying the estimate.}
\begin{remark}
From Proposition \ref{prop:Schauder} we can recover analogous Schauder estimates for the non-kinetic Cauchy problem
\begin{equation}\label{eq:cauchy_zvonkin_gen_uni}
\begin{cases}
\big[ \partial_t + \Lc_v  - \lambda \big] u(t,v) = -f(t,v) , \qquad (t,v)\in(0,T)\times \R,\\
 u_{\lambda} (T,\cdot) \equiv 0,
\end{cases}
\end{equation} 
which are useful to prove pathwise uniqueness of the autonomous equation 
\eqref{eq:sde}. Indeed, from Proposition \ref{prop:Schauder}, it readily follows that there exists a continuous function $u=u(t,v)$ such that 
\begin{equation}
u(t,v) = \int_t^T \big[ \Lc_v u  - \lambda  u + f  \big] (s,v) ds, \qquad (t,v)\in [0,T]\times\R,
\end{equation}
for which the following estimate holds:
\begin{equation}
\|\partial_v u \|_{L^\infty_T{\bf C}^{\alpha+\gamma-1} } \leq C \| f \|_{L^\infty_T {\bf C}^\gamma }, 
\end{equation}
for some $C>0$ (independent of $\lambda$). Furthermore, if $\lambda>0$ is large enough we have
\begin{equation}\label{eq:bound_grad_auton}
 \| \partial_v u  \|_{L^\infty_T([0,T]\times \R)}\leq \frac{1}{2} .
\end{equation} 
\end{remark} 
To prove Proposition \ref{prop:Schauder}, we introduce the mixed H\"older-Zygmund spaces employed in \cite{MR4124429}. For $h\in\R$ and $m\in \mathbb{N}$, define the operators $\delta_{h,1}, \delta_{h,2}$ acting on $g=g(v,x):\R^2 \to \R$ as
\begin{equation}
\delta^m_{h,1} g(v,x) = \delta^m_h g(\cdot,x)(v), \qquad \delta^m_{h,2} g(v,x) = \delta^m_h g(v,\cdot)(x),
\end{equation}
and the mixed norm 
\begin{equation}
\|g\|_{{\bf C}_x^{\nu}  {\bf C}_v^{\beta}} :=\|g\|_{ {\bf C}_v^{\beta}} + \|g\|_{{\bf C}_x^{\nu} }+ \sup_{(v,x),(h,h')\in \R^2}\frac{ \big|  \delta^{\lfloor \nu \rfloor + 1}_{h,2}\delta^{\lfloor \beta \rfloor + 1}_{h',1} g(v,x)  \big| }{|h|^{\nu} |h'|^{\beta}}, \qquad \nu,\beta>0.
\end{equation} 
Finally, we set $L^{\infty}_T{\bf C}_x^{\nu}  {\bf C}_v^{\beta}$ as the set of functions $f=f(t,\cdot,\cdot):[0,T] \to C_b(\R^2)$ such that
\begin{equation}
\| f \|_{L^{\infty}_T{\bf C}_x^{\nu}  {\bf C}_v^{\beta}} : =  \sup_{t\in[0,T]}  \|f(t,\cdot,\cdot)\|_{{\bf C}_x^{\nu}  {\bf C}_v^{\beta}} <\infty.    
\end{equation}
\begin{remark}\label{rem:inequ_mixed}
A direct computation shows that, for any $\delta,\nu>0$ we have
\begin{equation}
\|\partial_v f\|_{L^{\infty}_T {\bf C}_x^{\nu}} \leq \| f \|_{L^\infty_T{\bf C}_x^{\nu}  {\bf C}_v^{1+\delta}  }.
\end{equation}
\end{remark}
{We will need the following estimate, which is proved in \cite[Lemma 4.1-(i)]{MR4124429}.}
\begin{lemma}\label{lemm:interpol}
Let $\beta_1,\beta_2,\nu_1,\nu_2 \geq 0$, $\theta\in (0,1)$. There exists $C>0$ (depending on all the previous indices) such that  
\begin{equation}
\|g\|_{{\bf C}_x^{\nu}  {\bf C}_v^{\beta}} \leq C \|g\|^\theta_{{\bf C}_x^{\nu_1}  {\bf C}_v^{\beta_1}} \, \|g\|^{1-\theta}_{{\bf C}_x^{\nu_2}  {\bf C}_v^{\beta_2}},
\end{equation}
where
\begin{equation}
\beta = \theta \beta_1 + (1-\theta) \beta_2 ,\qquad \nu = \theta \nu_1 + (1-\theta) \nu_2 .
\end{equation}
\end{lemma}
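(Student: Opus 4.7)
The plan is to reduce the statement to the classical one-variable interpolation for Hölder--Zygmund spaces, namely
\begin{equation}
\|h\|_{\mathbf{C}^{\beta}} \leq C \|h\|_{\mathbf{C}^{\beta_1}}^{\theta}\|h\|_{\mathbf{C}^{\beta_2}}^{1-\theta}, \qquad \beta=\theta\beta_1+(1-\theta)\beta_2,
\end{equation}
which is standard (e.g.\ Triebel) and follows either from the Littlewood--Paley characterization (each dyadic block satisfies $2^{j\beta}\|\Delta_j h\|_\infty = (2^{j\beta_1}\|\Delta_j h\|_\infty)^\theta (2^{j\beta_2}\|\Delta_j h\|_\infty)^{1-\theta}$) or, equivalently, from direct estimates on finite differences of sufficiently high order. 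I will apply this ingredient in each of the two variables.

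First, I would decompose the mixed norm $\|g\|_{\mathbf{C}^{\nu}_x\mathbf{C}^{\beta}_v}$ into the three pieces appearing in its definition: the two directional seminorms $\|g\|_{\mathbf{C}^{\beta}_v}=\sup_x\|g(\cdot,x)\|_{\mathbf{C}^{\beta}}$ and $\|g\|_{\mathbf{C}^{\nu}_x}=\sup_v\|g(v,\cdot)\|_{\mathbf{C}^{\nu}}$, plus the genuinely mixed seminorm
\begin{equation}
[g]_{\nu,\beta}:=\sup_{(v,x),\,h,h'\neq 0}\frac{\bigl|\delta^{\lfloor \nu\rfloor+1}_{h,2}\delta^{\lfloor\beta\rfloor+1}_{h',1}g(v,x)\bigr|}{|h|^{\nu}|h'|^{\beta}}.
\end{equation}
For the two directional seminorms the bound is immediate: apply the one-variable interpolation pointwise in the frozen variable and bound the resulting one-variable norms $\|g(\cdot,x)\|_{\mathbf{C}^{\beta_i}}$ by $\|g\|_{\mathbf{C}^{\nu_i}_x\mathbf{C}^{\beta_i}_v}$ (and symmetrically for the $x$-direction), then take the supremum in the frozen variable.

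The crux is the mixed seminorm. For a fixed $h\neq 0$, introduce the auxiliary function $G_h(v,x):=\delta^{m}_{h,2}g(v,x)$ with $m$ an integer large enough to exceed $\max(\nu,\nu_1,\nu_2)$ (this common choice lets me sidestep the annoying integer discrepancies between $\lfloor\nu\rfloor+1$ and $\lfloor\nu_i\rfloor+1$; the standard fact that any two difference orders above the exponent yield equivalent Zygmund seminorms, modulo absorbing excess differences into the sup-norm at cost of a power of $|h|$, will be used here). From the very definition of $\|\cdot\|_{\mathbf{C}^{\nu_i}_x\mathbf{C}^{\beta_i}_v}$ one obtains the two key estimates
\begin{equation}
\|G_h(\cdot,x)\|_{\mathbf{C}^{\beta_i}}\leq C\,|h|^{\nu_i}\,\|g\|_{\mathbf{C}^{\nu_i}_x\mathbf{C}^{\beta_i}_v},\qquad i=1,2,
\end{equation}
uniformly in $x$. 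I now apply the one-variable interpolation to $G_h(\cdot,x)$ in $v$ with exponents $\beta_1,\beta_2$ and weight $\theta$, which produces a bound of the form
\begin{equation}
\|G_h(\cdot,x)\|_{\mathbf{C}^{\beta}}\leq C\,|h|^{\theta\nu_1+(1-\theta)\nu_2}\,\|g\|_{\mathbf{C}^{\nu_1}_x\mathbf{C}^{\beta_1}_v}^{\theta}\,\|g\|_{\mathbf{C}^{\nu_2}_x\mathbf{C}^{\beta_2}_v}^{1-\theta}=C\,|h|^{\nu}\,\|g\|_{\mathbf{C}^{\nu_1}_x\mathbf{C}^{\beta_1}_v}^{\theta}\,\|g\|_{\mathbf{C}^{\nu_2}_x\mathbf{C}^{\beta_2}_v}^{1-\theta}.
\end{equation}
Dividing by $|h|^{\nu}$, taking the supremum over $h$, $x$ and the inner finite differences of order $\lfloor\beta\rfloor+1$ in the $v$-variable, and applying the equivalence of Zygmund seminorms mentioned above to revert from order $m$ to $\lfloor\nu\rfloor+1$, yields exactly the bound on $[g]_{\nu,\beta}$.

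The main obstacle is therefore the purely combinatorial bookkeeping of finite-difference orders: one must make sure that choosing a uniformly large integer order $m$ really gives a norm equivalent to the original Zygmund norm defined with $\lfloor\cdot\rfloor+1$, and track the powers of $|h|$ generated when absorbing extra differences. Once this is taken care of, the proof is a two-step application of the one-variable interpolation, first pointwise in the frozen variable for the directional pieces, and then to the auxiliary function $G_h$ for the mixed seminorm.
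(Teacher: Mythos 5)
The paper itself contains no proof of this lemma: it is imported verbatim from [Lemma 4.1-(i)] of the cited reference \cite{MR4124429}, which establishes it within the Fourier-analytic (Littlewood--Paley) framework developed there for the Schauder theory. Your finite-difference argument is therefore a genuinely different, self-contained and more elementary route, and it is essentially correct: the directional pieces follow by freezing one variable, and your two key bounds $\|\delta^m_{h,2}g(\cdot,x)\|_{{\bf C}^{\beta_i}}\le C|h|^{\nu_i}\|g\|_{{\bf C}^{\nu_i}_x{\bf C}^{\beta_i}_v}$ do hold, since the difference operators commute and the excess $x$-differences of order $m-(\lfloor\nu_i\rfloor+1)$ can be discarded at the price of a constant. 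The one step you should make explicit is the final reversion from order $m$ to order $\lfloor\nu\rfloor+1$ in $x$: this is not the ``easy'' absorption of extra differences you allude to, but the converse, Marchaud-type direction of the equivalence of Zygmund norms for different orders above the exponent, applied to $\phi(x)=\delta^{\lfloor\beta\rfloor+1}_{h',1}g(v,x)/|h'|^{\beta}$ uniformly in $(v,h')$; it produces an additional term $\sup\|\phi\|_{\infty}\le\|g\|_{{\bf C}^{\beta}_v}$, which is harmless because that directional norm has already been interpolated. With this classical ingredient spelled out (and the convention ${\bf C}^{0}=L^{\infty}$ to cover the endpoint $\beta_2=0$ actually used in the paper), your proof closes; what the paper's citation buys is brevity, while your argument has the merit of avoiding any Littlewood--Paley machinery.
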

The following interpolation lemma is a particular case of \cite[Theorem 2.2 - Corollary 2.3]{MR4124429}.
\begin{lemma}\label{lemm:interpol_bis}
For any $0\leq s< r< t$ there is $C>0$ (depending on the previous parameters), such that 
\begin{equation}
\| g \|_{{\bf C}_v^{r}} \leq   \d   \| g \|_{{\bf C}_v^{t}}     +     C \d^{\frac{s-r}{t-r}}  \| g \|_{{\bf C}_v^{s}},
\qquad \delta\in (0,1).
\end{equation}
\end{lemma}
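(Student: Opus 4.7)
The plan is to derive this additive estimate as a routine corollary of the classical \emph{multiplicative} interpolation inequality
\begin{equation*}
\| g \|_{{\bf C}_v^{r}} \leq C\, \| g \|_{{\bf C}_v^{s}}^{1-\theta}\, \| g \|_{{\bf C}_v^{t}}^{\theta}, \qquad \theta = \frac{r-s}{t-s},
\end{equation*}
combined with Young's inequality. The hard work lies entirely in the multiplicative bound; the passage to the additive form is an algebraic computation with the parameter $\delta$.

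First I would establish the multiplicative inequality. Since $\|g\|_{{\bf C}_v^\beta} = \sup_{x\in\R}\|g(\cdot,x)\|_{{\bf C}^\beta}$, it is enough to prove the one-variable version $\|h\|_{{\bf C}^r}\leq C \|h\|_{{\bf C}^s}^{1-\theta}\|h\|_{{\bf C}^t}^{\theta}$ for each slice $h(\cdot)=g(\cdot,x)$ and then take the supremum in $x$. The one-variable Zygmund interpolation is classical and admits a short proof via the Littlewood-Paley characterization $\|h\|_{{\bf C}^\beta}\simeq \sup_{j\ge -1} 2^{j\beta}\|\Delta_j h\|_{L^\infty}$: the identity $2^{jr}=(2^{js})^{1-\theta}(2^{jt})^\theta$ immediately gives the pointwise (in $j$) estimate on the block norms, and taking supremum in $j$ yields the result. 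Equivalently, this is the content of Lemma~\ref{lemm:interpol} applied with $\nu_1=\nu_2=0$ (so that the $x$-variable plays the role of an inactive parameter in the mixed norm).

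Second, with the multiplicative form in hand, I would apply Young's inequality with a free parameter $\eta>0$ to convert it. Writing $a:=\|g\|_{{\bf C}_v^s}$ and $b:=\|g\|_{{\bf C}_v^t}$, the factorization $a^{1-\theta}b^\theta=(\eta^\theta a)^{1-\theta}(\eta^{-(1-\theta)}b)^\theta$ followed by Young's inequality with exponents $1/(1-\theta)$ and $1/\theta$ yields
\begin{equation*}
a^{1-\theta}b^\theta \leq (1-\theta)\eta^\theta\, a + \theta\, \eta^{-(1-\theta)}\, b.
\end{equation*}
Choosing $\eta$ so that $C\theta\,\eta^{-(1-\theta)}=\delta$ (where $C$ is the constant from the multiplicative inequality) gives $\eta^\theta = C_\theta\, \delta^{-\theta/(1-\theta)}$ for a new constant $C_\theta$, and substituting back produces
\begin{equation*}
\|g\|_{{\bf C}_v^r} \leq C'\, \delta^{-\theta/(1-\theta)}\, \|g\|_{{\bf C}_v^s} + \delta\, \|g\|_{{\bf C}_v^t}.
\end{equation*}

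Finally, I would check that the exponent matches the statement: using $\theta=(r-s)/(t-s)$ and $1-\theta=(t-r)/(t-s)$ one gets $-\theta/(1-\theta) = -(r-s)/(t-r) = (s-r)/(t-r)$, which is precisely the exponent in the lemma. The condition $\delta\in(0,1)$ is preserved (at worst after replacing $\delta$ with $\delta/K$ for a harmless constant $K$ absorbed into $C$). The only real subtlety is verifying the multiplicative inequality at integer values of $\beta$, where the defining finite differences are of order $\lfloor\beta\rfloor+1$ rather than first order; the Littlewood-Paley argument sketched above bypasses this issue uniformly in $\beta$, which is why I would prefer it over a direct finite-difference proof.
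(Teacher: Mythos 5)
Your proposal is correct, but it follows a different route from the paper, which offers no proof at all: Lemma \ref{lemm:interpol_bis} is simply quoted as a particular case of \cite[Theorem 2.2 -- Corollary 2.3]{MR4124429}. Your derivation — multiplicative interpolation $\| g \|_{{\bf C}_v^{r}} \leq C \| g \|_{{\bf C}_v^{s}}^{1-\theta}\| g \|_{{\bf C}_v^{t}}^{\theta}$ with $\theta=(r-s)/(t-s)$, obtained slice-wise via the Littlewood--Paley characterization (or from Lemma \ref{lemm:interpol} with $\nu_1=\nu_2=0$), followed by Young's inequality with the optimized weight $\eta$ — is sound, and your exponent bookkeeping $-\theta/(1-\theta)=(s-r)/(t-r)$ does reproduce the stated power of $\delta$, with the high norm correctly receiving the coefficient $\delta$. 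What your argument buys is a short, self-contained and essentially elementary proof, uniform in $\beta$ (so no separate treatment of integer orders), whereas the citation buys the mixed-norm generality of \cite{MR4124429} that the paper needs elsewhere. The one point worth making explicit is the endpoint $s=0$, which the lemma allows and which the paper actually uses in the proof of Proposition \ref{prop:Schauder} with $\| g \|_{{\bf C}_v^{0}}$ read as the sup norm: the Littlewood--Paley interpolation naturally produces the Besov norm $B^0_{\infty,\infty}$ on the right, but since that norm is dominated by the $L^\infty$ norm the resulting inequality is only strengthened, so your argument covers this case as well; stating this one line would make the proof complete as written.
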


\begin{proof}[Proof of Proposition \ref{prop:Schauder}]
\emph{Case $\alpha\in(1,2)$.} \cite[Theorem 6.11]{MR4124429} states that a classical solution $u=u(t,v,x)$ exists for which the following estimates hold:
\begin{align}
\| u \|_{L^\infty_T{\bf C}_v^{\alpha+\gamma}} &+ \| u \|_{L^\infty_T {\bf C}_x^{(\alpha+\gamma+1)/(\alpha+1)} } 
+ \| u \|_{L^\infty_T{\bf C}_x^{(1-\rho)(1+\gamma)/(\alpha+1)}  {\bf C}_v^{\alpha+\rho \gamma}  } \\ \label{eq:estimate_zhang_6.11}
& \leq C\big( \| f \|_{L^\infty_T{\bf C}_v^\gamma }  +  \| f \|_{L^\infty_T {\bf C}_x^{(1+\g)/(1+\alpha)}} \big),\\ 
\|  u  \|_{L^\infty_T([0,T]\times \R^2)}&\leq \lambda^{-1} {\|  f  \|_{L^\infty_T([0,T]\times \R^2)}},  \label{eq:estimate_zhang_6.11_bis}
\end{align} 
for some $\rho\in(0,1)$ small enough and some $C>0$ independent of $\lambda$.
Estimate \eqref{eq:estimate_zhang_6.11} directly yields 
\begin{equation}\label{eq:schauder_first}
\|\partial_v u \|_{L^{\infty}_T {\bf C}_v^{\alpha+\gamma-1}} \leq C\big( \| f \|_{L^\infty_T{\bf C}_v^\gamma }  +  \| f \|_{L^\infty_T {\bf C}_x^{(1+\g)/(1+\alpha)}} \big).
\end{equation}
Furthermore, applying the {interpolation Lemma \ref{lemm:interpol}} with 
\begin{align}
 ({\nu_1},\beta_1) &= \big( (1-\rho)(1+\gamma)/(\alpha+1) , \alpha + \rho \gamma \big), \quad ({\nu_2},\beta_2) = \big( (\alpha+\gamma+1)/(\alpha+1) , 0 \big), \\
 \theta &= \frac{1+\delta}{\alpha + \rho \gamma},
\end{align}
 with $\delta>0$ suitably small, {\eqref{eq:estimate_zhang_6.11}} yields 
 \begin{equation}
\| u \|_{L^\infty_T{\bf C}_x^{(\alpha+\gamma-\eps)/(\alpha+1)}  {\bf C}_v^{1+\delta}  }
\leq C\big( \| f \|_{L^\infty_T{\bf C}_v^\gamma }  +  \| f \|_{L^\infty_T {\bf C}_x^{(1+\g)/(1+\alpha)}} \big), \quad \eps =\delta + (1+\delta) \frac{\rho}{\alpha + \rho \gamma},
\end{equation}
which in turn, by Remark \ref{rem:inequ_mixed}, implies
\begin{equation}
 \|\partial_v u\|_{L^{\infty}_T {\bf C}_x^{(\alpha+\gamma-\eps)/(\alpha+1)}} \leq C\big( \| f \|_{L^\infty_T{\bf C}_v^\gamma }  +  \| f \|_{L^\infty_T {\bf C}_x^{(1+\g)/(1+\alpha)}} \big).
\end{equation}
This, together with \eqref{eq:schauder_first}, proves \eqref{eq:schauder}.

Finally, by the interpolation {Lemma \ref{lemm:interpol_bis}} with $s=0$, $r=1+\gamma$ and $t=\alpha+\gamma$, we obtain
\begin{equation}\label{eq:estimate_grad1}
\| u \|_{L^\infty_T{\bf C}_v^{1+\gamma}} \leq \delta \| u \|_{L^\infty_T{\bf C}_v^{\alpha+\gamma}} + \kappa\, \delta^{-\frac{1+\gamma}{\alpha-1}} \|  u  \|_{L^\infty_T([0,T]\times \R^2)}, \qquad \delta \in (0,1).
\end{equation}
Also, applying the same estimate with $s=0$, $r=({\alpha+1+\frac{\gamma}{2}})/({\alpha+1})$ and $t=({\alpha+1+{\gamma}})/({\alpha+1})$ yields
\begin{equation}\label{eq:estimate_grad2}
\| u \|_{L^\infty_T{\bf C}_x^{({\alpha+1+\frac{\gamma}{2}})/({\alpha+1})}}  \leq \delta \| u \|_{L^\infty_T{\bf C}_x^{({\alpha+1+{\gamma}})/({\alpha+1})}} + \kappa\, \delta^{-\frac{\alpha+1+\gamma/2}{\gamma/2}} \|  u  \|_{L^\infty_T([0,T]\times \R^2)}, \quad \delta \in (0,1).
\end{equation}
{By employing first \eqref{eq:estimate_grad1}-\eqref{eq:estimate_grad2} with $\delta$ small enough, then \eqref{eq:estimate_zhang_6.11}, and finally \eqref{eq:estimate_zhang_6.11_bis} with $\lambda$ large enough, we obtain \eqref{eq:bound_grad_u}}. 

\vspace{2pt}

\emph{Case $\alpha=2$.} The proof is identical to that of the case $\alpha\in(1,2)$, provided that \eqref{eq:estimate_zhang_6.11}-\eqref{eq:estimate_zhang_6.11_bis} hold true for $\alpha=2$. This can be checked by adapting the proof of \cite[Theorem 6.11]{MR4124429}. This is a straightforward exercise, and we omit the details for brevity.
\end{proof}

%

%

\section{One-dimensional setting
}\label{sec:proof_th_one}

Throughout this section set $d=1$, fix $\alpha\in(1,2]$ and consider $\mu,\sigma,F:[0,T]\times{\R\to \R}$  coefficient functions satisfying the assumptions of Theorem \ref{th:main}-(i). We need to prove strong well-posedness of Eq. \eqref{eq:sde_x} under the assumption that strong well-posedness holds for Eq. \eqref{eq:sde}. 

\subsection{Preliminary estimates}\label{sec:regul_estimates}

Fix $\lambda>0$ sufficiently large and consider, for any $m\in\mathbb{N}$, the Cauchy problem
\begin{equation}\label{eq:cauchy_app0}
\begin{cases}
\big[\Lc_{v} + (v + F(t,x))\p_x+\p_t -\l \big] u_m= - F_m(t,x),\\
u_m(T,\cdot)=0.
\end{cases}
\end{equation}
Here $\Lc_{v}$ is defined as in \eqref{eq:Lcal}, $F_m(t,x):=F(t,x) \chi_m(x)$, and $\chi_m$ is a smooth cutoff function on $\R$ such that
	\begin{equation}\label{eq:cutoff}
		\chi_m(x)=1, \  |x| \leq m, \qquad \chi_m(x)=0, \  |x|>m+1.
	\end{equation}
{Note that we here introduce a cut-off in order to apply the Schauder estimates of Proposition \ref{prop:Schauder} which are given for a bounded source term}.
By Proposition \ref{prop:Schauder} with $f=F_m$, a suitable $\eps>0$ and $\lambda$ large enough, there exists a classical solution $u_m$ to \eqref{eq:cauchy_app0} such that
	\begin{align}
	| u_m(t,v,x)- u_m(t,v',x')| & \leq  \frac{1}{2}\big( |v-v'| + |x-x'| \big), \label{eq:lip_u}\\
	|\p_v u_m(t,v,x)-\p_v u_m(t,v,x')| & \leq C |x-x'|^{\frac{1}{2}}, \label{eq:hold_u}
	\end{align}
for any $ t \in [0, T]$ and $v,x,x'\in\R$, with $C>0$ positive constant. 
\begin{remark}
We stress that, as we derived \eqref{eq:schauder}-\eqref{eq:bound_grad_u} in Proposition \ref{prop:Schauder} as a consequence of the general Schauder estimate \eqref{eq:estimate_zhang_6.11}, the constants $\lambda$ and $C$ above seem to depend on $\| F_m\|_{L^{\infty}([0,T]\times \R)}$, and thus on $m$. Actually, as \eqref{eq:lip_u}-\eqref{eq:hold_u} only contain estimates on the differences of $u_m$, these estimates could be obtained with $\lambda$ and $C$ independent of $m$. However, in the proof below, the dependence of $\lambda$ and $C$ on $m$ is not relevant, thus we do not need a sharp result in this sense.
\end{remark}
Now set
\begin{align}
\hspace{-20pt}
\phi_m(t,v,x,x')&:= x - x' +  u_m(t,v,x) - u_m(t,v,x'), 	\\
\hspace{-20pt}\uuu_m(t,v,x,x';w)&:=	 u_m(t,v+\s(t,v)w,x)-u_m(t,v,x)\\&\quad\;\,						-u_m(t,v+\s(t,v)w,x')+u_m(t,v,x'), \label{eq:def_h_m}
\end{align}
for any $t\in [0,T]$ and $v,w,x,x'\in\R$. We have the following

\begin{lemma}\label{lem:zm}
Let 
$\phi_m$ and $\uuu_m$ be defined as above. Then there exists $\kappa>0$ such that, {for any $m\in\Nb$},
\begin{align}
 \label{eq:zm3}
|\uuu_m(t,v,x,x';w)| & \leq \kappa |w| 
(1 \wedge |x-x'|^{1/2} ),\\  
|x-x'| & \leq 2 (|\phi_m(t,v,x,x')|\wedge |\phi_m(t,v,x,x') + \uuu_m(t,v,x,x';w)|),\\\label{eq:zm3_bis}
\end{align}
for any $t\in[0,T]$, and $v,w,x,x'\in\R$.
%
\end{lemma}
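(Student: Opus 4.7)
The plan is to derive both estimates directly from the Lipschitz bound \eqref{eq:lip_u} and the Hölder bound \eqref{eq:hold_u} on $u_m$, together with the boundedness of $\sigma$ from Hypothesis [{\bf Coe}] (which yields $\|\sigma\|_{L^\infty_T([0,T]\times\R)}\le \Lambda^{1/2}$). No further information about $F_m$ beyond what is already baked into $u_m$ is required.

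For \eqref{eq:zm3}, I would produce two separate upper bounds for $|\uuu_m|$ and then take the minimum. The easy bound by $|w|$ alone comes from applying \eqref{eq:lip_u} independently to each of the two pairs of $u_m$-increments in \eqref{eq:def_h_m}, i.e.\
\begin{align}
|u_m(t,v+\s(t,v)w,x)-u_m(t,v,x)|\le \tfrac12|\s(t,v) w|,
\end{align}
and similarly with $x$ replaced by $x'$, yielding $|\uuu_m|\le \|\s\|_\infty |w|$. The bound by $|w|\,|x-x'|^{1/2}$ comes from a first-order Taylor expansion in $v$: writing
\begin{align}
\uuu_m(t,v,x,x';w)=\s(t,v) w\int_0^1 \bigl[\partial_v u_m(t,v+s\s(t,v)w,x)-\partial_v u_m(t,v+s\s(t,v)w,x')\bigr]\,ds,
\end{align}
and invoking \eqref{eq:hold_u} inside the integral gives $|\uuu_m|\le C\|\s\|_\infty |w|\,|x-x'|^{1/2}$. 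Taking the minimum of the two bounds and adjusting the constant produces \eqref{eq:zm3}.

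For \eqref{eq:zm3_bis}, the first observation is that $\phi_m+\uuu_m$ rewrites compactly as
\begin{align}
\phi_m(t,v,x,x')+\uuu_m(t,v,x,x';w)=(x-x')+u_m(t,v+\s(t,v)w,x)-u_m(t,v+\s(t,v)w,x'),
\end{align}
i.e.\ it is nothing but $\phi_m$ evaluated with $v$ shifted to $v+\s(t,v)w$. Thus both $\phi_m$ and $\phi_m+\uuu_m$ have the form $(x-x')+\bigl(u_m(t,\tilde v,x)-u_m(t,\tilde v,x')\bigr)$ for some $\tilde v\in\R$. Applying \eqref{eq:lip_u} with $v=v'=\tilde v$ yields $|u_m(t,\tilde v,x)-u_m(t,\tilde v,x')|\le \tfrac12|x-x'|$, and the reverse triangle inequality then gives
\begin{align}
|\phi_m|\wedge|\phi_m+\uuu_m|\ge |x-x'|-\tfrac12|x-x'|=\tfrac12|x-x'|,
\end{align}
which is exactly \eqref{eq:zm3_bis}.

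There is no real obstacle here: the whole lemma is an algebraic consequence of the Schauder estimates \eqref{eq:lip_u}--\eqref{eq:hold_u} together with the boundedness of $\s$. The only point to be careful about is recognizing that $\phi_m+\uuu_m$ has the clean ``$v$-shifted'' form above, so that the Lipschitz bound \eqref{eq:lip_u} (whose crucial feature is the constant $\tfrac12$ coming from the large-$\lambda$ choice in Proposition \ref{prop:Schauder}) can be applied uniformly in the shifted argument. The factor $2$ in \eqref{eq:zm3_bis} is a direct consequence of that constant $\tfrac12$.
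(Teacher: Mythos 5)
Your argument is correct and follows essentially the same route as the paper: the Hölder bound \eqref{eq:zm3} comes from the fundamental theorem of calculus in the $v$-variable combined with \eqref{eq:lip_u}--\eqref{eq:hold_u} and boundedness of $\sigma$, and \eqref{eq:zm3_bis} comes from the reverse triangle inequality together with the $\tfrac12$-Lipschitz bound, after noting that $\phi_m+\uuu_m$ is $\phi_m$ with $v$ shifted to $v+\sigma(t,v)w$. The only cosmetic difference is that the paper obtains both parts of \eqref{eq:zm3} from a single FTC representation, whereas you derive the plain $|w|$-bound directly from \eqref{eq:lip_u} applied to each $u_m$-increment and the $|w|\,|x-x'|^{1/2}$-bound separately from the Taylor formula; these are interchangeable.
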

\begin{proof}
{Noticing that $\p_{\rho}u_m$ is bounded (see \eqref{eq:lip_u})}, by the fundamental theorem of calculus, we have				
\begin{equation}			
\uuu_m(t,v,x,x';w)			=	\int_0^{w\s(t,v)} \big( \p_{\rho}u_m(t,v+\rho,x) -  \p_{\rho}u_m(t,v+\rho,x') \big) d\rho .
				\end{equation}
				{Thus \eqref{eq:zm3} stems from \eqref{eq:lip_u}-\eqref{eq:hold_u} and because $\s$ is bounded.}
				
By triangular inequality and by the Lipschitz estimate \eqref{eq:lip_u}, we have
\begin{equation}
|\phi_m(t,v,x,x')| \geq  |x - x'| - |u_m(t,v,x)-u_m(t,v,x')| \geq  \frac{1}{2}|x - x'|.
\end{equation}
Similarly,
\begin{align}
&|\phi_m(t,v,x,x') + \uuu_m(t,v,x,x';w)| \\& = |x-x' + u_m(t,v+\s(t,v)w,x) - u_m(t,v+\s(t,v)w,x')| \\
& \geq |x - x'| - |u_m(t,v+{\s(t,v)w},x)-u_m(t,v{+\s(t,v)w},x')| \\
&\geq  \frac{1}{2}|x - x'|,
\end{align}
and thus we obtain \eqref{eq:zm3_bis}.
\end{proof}

\subsection{Proof of Theorem \ref{th:main}-(i)}\label{sec:proof_th_one_core}

By Proposition \ref{prop:density_bound}, the system \eqref{eq:sde}-\eqref{eq:sde_x} is weakly solvable. Therefore, by {Yamada-Watanabe theorem} {(see \cite[Theorem 1.1, Chapter IV]{ikeda2014stochastic} 
{for the case $\alpha=2$, and \cite[Theorem 137]{MR2160585} for the case $\alpha\in(1,2)$})}, 
we only have to show pathwise uniqueness. To this end, we fix hereafter: 
\begin{itemize}
\item A filtered probability space $(\Omega, \F, (\F_t)_{t\in[0,T]}, \Pb)$ satisfying the usual assumptions of completeness and right-continuity on the filtration, and a scalar symmetric (i.e. rotationally invariant) $\alpha$-stable L\'evy process {$L^{(\alpha)}$ adapted to $(\F_t)_{t\in[0,T]}$}, namely a Markov process generated by the fractional Laplacian $\Delta_v^{\alpha/2}$ in \eqref{eq:1d_frac_lap};
\item A solution $(V_t)_{t\in[0,T]}$ to \eqref{eq:sde}, 
which is, by assumption, pathwise unique up to fixing the initial condition $V_0$, measurable with respect to $\F_0$; 
\item Two solutions $(X_t)_{t\in[0,T]}$ and $(X'_t)_{t\in[0,T]}$ to \eqref{eq:sde_x}, {defined on $(\Omega, \F, (\F_t)_{t\in[0,T]}, \Pb)$}, such that $X_0 = X'_0$ $\Pb$-almost surely.
\end{itemize}
We need to prove
\begin{equation}\label{eq:X_X'_equal_bis}
X_t = X'_{t} \quad \Pb\text{-a.s.}, \qquad t\in (0,T].
\end{equation}

\paragraph{First step: Zvonkin transform}

Let $m\in\Nb$. 
By It\^o formula we have
\begin{align}\label{eq:ito_um}
du_m(t,V_t,X_t)= 	&\(\Lc_{v} + (v + F(t,x))\p_x + \p_t\)u_m(t,V_{t-},X_{t})dt  + d M^X_t \\
		=	&  \(- F_m (t,X_{t})+ \l u_m(t,V_{t-},X_{t})\) dt + d M^X_t ,
\end{align}
where we have used that $u_m$ solves \eqref{eq:cauchy_app0}, and where $M^X$ is a local martingale to be later specified.
Same formulas hold for $du_m(t,V_t,X'_t)$. 
Therefore, 
setting the stopping time
\begin{equation}\label{eq:tau_m}
		\t_m \coloneqq \inf\{s > 0 \colon |X_s| \vee |X'_s| \vee |\Delta L^{(\alpha)}_s| \geq m\},
	\end{equation}
	and owing to \eqref{eq:cutoff}, we obtain
\begin{align}
\int_0^{t\wedge\t_m} F (s,X_{s}) ds &= \int_0^{t\wedge\t_m} F_m (s,X_{s}) ds \\
			& = 
		u(0,V_0,X_0) - u_m({t\wedge\t_m},V_{t\wedge\t_m},X_{t\wedge\t_m}) \\&+ \l \int_0^{t\wedge\t_m} u_m (s,V_{s},X_{s}) ds + M^X_{t\wedge\t_m}, 
\label{eq:F}
\end{align}
and the same representation holds by replacing $X$ with $X'$. 
%
Thus, as both $X$ and $X'$ are solutions to \eqref{eq:sde_x}, we obtain
\begin{align}
X_{t\wedge\t_m} - X'_{t\wedge\t_m} 	& = \int_0^{t\wedge\t_m} F (s,X_{s}) ds - \int_0^{t\wedge\t_m} F (s,X'_{s}) ds	\\
			& =  u_m(t\wedge\t_m,V_{t\wedge\t_m},X'_{t\wedge\t_m}) - u_m(t\wedge\t_m,V_{t\wedge\t_m},X_{t\wedge\t_m}) \\
			&\quad + \l\int_0^{t\wedge\t_m}  \big(u_m(s,V_{s},X_{s})-u_m(s,V_{s},X'_{s})\big) ds + M^X_{t\wedge\t_m}-M^{X'}_{t\wedge\t_m} .
\label{eq:diff_X_Xprime}
\end{align}

\paragraph{Second step: Watanabe-Yamada technique}

We start with the following technical lemma. Although it is a classical result (see \cite[Chapter IV (Theorem 3.2)]{ikeda2014stochastic}), we provide a proof for completeness.
\begin{lemma}\label{lem:watanabe}
There exists a sequence $(\f_n)_{n\in\Nb}$ of functions in $C^2(\R)$ {and a decreasing sequence of positive real numbers $(a_n)_{n\in\Nb}$} such that, for any $x\in\R$ and $n\in\Nb$,
\begin{itemize}
\item[(i)] $\f_n(x)\geq 0$, and $\f_n(0)=0$,
\item[(ii)] $|\f'_n(x)|\leq1$, 
\item[(iii)] $|\f''_n(x)|\leq \frac{1}{n}\frac{1}{|x|} \mathds{1}_{]a_n,a_{n-1}[}(x)$, 
\item[(iv)] $\f_n(x)\nearrow|x|$ as ${n\to\infty}$.
\end{itemize}
\end{lemma}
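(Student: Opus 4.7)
The plan is to mimic the classical Yamada--Watanabe construction, building $\varphi_n$ as the second antiderivative of an approximate identity $\rho_n$ supported in a small interval away from the origin. The guiding balance is that (ii) forces $\int \varphi_n'' = \int \rho_n$ to stay bounded (by $1$), while (iii) forces $\rho_n(y)=\varphi_n''(y) \leq 1/(ny)$; so the support of $\rho_n$ must be long enough that $\int \tfrac{dy}{ny} \geq 1$ over it.

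First, I would fix $a_0 := 1$ and define $(a_n)_{n\in\Nb}$ recursively by $\log(a_{n-1}/a_n) = n$, yielding explicitly $a_n = \exp(-n(n+1)/2)$, strictly decreasing to $0$. Since by construction $\int_{a_n}^{a_{n-1}} \tfrac{dy}{ny} = 1$, I can pick a continuous $\rho_n\colon\R\to[0,\infty)$, supported in $(a_n, a_{n-1})$, satisfying $\rho_n(y) \leq \tfrac{1}{ny}$ and $\int_\R \rho_n = 1$ (for instance, by smoothly tapering a suitable constant multiple of $y \mapsto 1/(ny)$ near the endpoints of the interval).

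Next, I would set
\begin{equation}
\varphi_n(x) := \int_0^{|x|} \int_0^y \rho_n(z)\, dz\, dy, \qquad x\in\R.
\end{equation}
Since $\rho_n$ vanishes on $[0,a_n]$, we have $\varphi_n \equiv 0$ on $[-a_n,a_n]$, so $\varphi_n\in C^2(\R)$, and (i) is immediate. Differentiation gives $\varphi_n'(x) = \mathrm{sgn}(x)\int_0^{|x|}\rho_n$, hence $|\varphi_n'| \leq \int_\R \rho_n = 1$, yielding (ii); and $\varphi_n''(x) = \rho_n(|x|)$, yielding (iii). For (iv), once $a_{n-1} < x$ one has $\varphi_n'(y) \equiv 1$ for $y \in (a_{n-1},\infty)$, so $x - a_{n-1} \leq \varphi_n(x) \leq x$; since $a_{n-1}\to 0$, this forces $\varphi_n(x)\to|x|$ pointwise, and by evenness the same holds for $x<0$.

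The only subtle point I anticipate is the monotone increase ``$\nearrow$'' in (iv), which is not automatic from independently chosen $\rho_n$. I would handle it by coupling the construction across $n$: for $x \geq a_{n-1}$ the identity $\varphi_n(x) = x - \bigl(a_{n-1} - \varphi_n(a_{n-1})\bigr)$ reduces the comparison of $\varphi_{n+1}$ and $\varphi_n$ to controlling the single defect $a_{n-1} - \varphi_n(a_{n-1})$, which can be made decreasing in $n$ by imposing that $\rho_n$ concentrates mass ever closer to the lower endpoint $a_n$ (while respecting all previous inequalities). With this coordination the four properties are secured; all remaining verifications are elementary calculus, so the main difficulty is precisely this bookkeeping needed for monotonicity.
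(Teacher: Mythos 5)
Your approach is essentially the same as the paper's (second antiderivative of a continuous bump supported in a geometrically defined annulus), but two issues are worth flagging, one of which is a genuine gap.

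\textbf{Gap in constructing $\rho_n$.} With your choice $a_n = e^{-n(n+1)/2}$ you have $\int_{a_n}^{a_{n-1}} \frac{dy}{ny} = 1$ \emph{exactly}. Any $\rho_n$ with $0 \le \rho_n \le \frac{1}{ny}$ and $\int \rho_n = 1$ must then equal $\frac{1}{ny}$ a.e.\ on $(a_n,a_{n-1})$, so it cannot be a continuous function on $\R$ whose support is (compactly) contained in the open interval: tapering near the endpoints strictly decreases the mass below $1$, and scaling up to compensate violates $\rho_n \le \frac{1}{ny}$. You need strict slack, i.e.\ $\int_{a_n}^{a_{n-1}} \frac{dy}{ny} > 1$. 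The paper takes $a_n = e^{-n(n+1)}$, which makes that integral equal to $2$, leaving room to taper and still integrate to $1$.

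\textbf{Monotonicity in (iv) is automatic; no coupling is needed.} Since the support of $\rho_{n+1}$ lies in $(a_{n+1},a_n)$, which is strictly to the left of $(a_n,a_{n-1})$, the derivatives compare pointwise on $[0,\infty)$ for \emph{any} admissible choices: $\varphi_{n+1}'(x)=\varphi_n'(x)=0$ for $x\le a_{n+1}$; $\varphi_{n+1}'\ge 0=\varphi_n'$ for $a_{n+1}\le x\le a_n$; $\varphi_{n+1}'=1\ge\varphi_n'$ for $a_n\le x\le a_{n-1}$; and both equal $1$ for $x\ge a_{n-1}$. Integrating from $0$ gives $\varphi_n\le\varphi_{n+1}$, and by evenness the same holds on the negative axis. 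The ``defect'' $c_n := a_{n-1}-\varphi_n(a_{n-1})$ you introduce is in fact automatically decreasing ($c_{n+1}\le a_n\le c_n$), so the extra condition that $\rho_n$ concentrate near $a_n$ is unnecessary. Your proposal identifies the wrong step as the difficult one: (iv) is free, whereas the construction of $\rho_n$ is where care with the choice of $(a_n)$ is required.
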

\begin{proof}
We set $a_n=e^{-n(n+1)}$ for $n\in\Nb$. 
Then
\begin{equation}\label{eq:int_an}
\int_{a_n}^{a_{n-1}} \frac{1}{2 w}  dw = n.
\end{equation}
For $n\in\Nb$, let $\psi_n$ be a continuous function supported on $[a_n,a_{n-1}]$ such that
\begin{equation}
0 \leq \psi_n(x) \leq \frac{1}{ n x}, \qquad \int_{a_n}^{a_{n-1}} \psi_n(y) dy =1.
\end{equation}
Note that such a $\psi_n$ exists thanks to \eqref{eq:int_an}. 
We define 
\begin{equation}
\f_n(x) \coloneqq \int_{0}^{|x|} \int_{0}^{\y} \psi_n(y) dy d\y.
\end{equation}
One can easily check that all properties $(i)$-$(iv)$ are satisfied by $\f_n$. {In particular, $(iv)$ stems from 
\begin{equation}
\f_n(0) =0, \qquad \f'_n(x) \leq \f'_{n+1}(x)\quad\text{if } x\geq 0,    \qquad \f'_n(x) \geq \f'_{n+1}(x)\quad\text{if } x\leq 0,
\end{equation}
for any $x\in\R$ and $n\in\Nb$.}
\end{proof}

For any $m\in\Nb$ and $t\in [0,T]$, set
\begin{align}\label{eq:Zm_def_bis} 
Z^m_t& := \phi_m(t\wedge\t_m, V_{t\wedge\t_m}, X_{t\wedge\t_m}, X'_{t\wedge\t_m}) \\
& = X_{t\wedge\t_m} - X'_{t\wedge\t_m} + u_m(t\wedge\t_m,V_{t\wedge\t_m},X_{t\wedge\t_m})-u_m({t\wedge\t_m},V_{t\wedge\t_m},X'_{t\wedge\t_m}), \label{eq:Zm_def} 
\end{align}
and observe that \eqref{eq:zm3_bis} yields
\begin{equation}
|X_{t\wedge\t_m} - X'_{t\wedge\t_m}|\leq 2 |Z^m_{t}|.
\label{eq:zm1}
\end{equation}

We now need the following estimate, whose proof is postponed until Section \ref{sec:proof_bound_phi_n}.

\begin{proposition}\label{prop:bound_phi_Z}
Let $(\f_n)_{n\in\Nb}$ be a sequence as in Lemma \ref{lem:watanabe}. For any $m\in\Nb$, there exists $C_m>0$ such that
\begin{equation}\label{eq:bound_phi_n_Zm}
E\[\f_n(Z^m_t)\] \leq \l \int_0^t E[|Z^m_s|] ds + C_m \frac{t}{n}, \qquad t\in[0,T],\quad n\in\Nb.
\end{equation}
\end{proposition}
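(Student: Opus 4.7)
The strategy is to apply It\^o's formula to $\f_n(Z^m_t)$ for the Yamada--Watanabe mollifier of Lemma \ref{lem:watanabe}, exploiting the fact that the Zvonkin transform has essentially eliminated the bad drift $F$. Combining the representation \eqref{eq:diff_X_Xprime} with the definition \eqref{eq:Zm_def_bis}, we see that on $[0, \t_m]$,
\begin{equation*}
Z^m_{t} = \l \int_0^{t\wedge\t_m} \big(u_m(s,V_s,X_s) - u_m(s,V_s,X'_s)\big) ds + \bar{M}_{t\wedge\t_m},
\end{equation*}
where $\bar M := M^X - M^{X'}$ is the residual local martingale: a Brownian stochastic integral with integrand $\s(s,V_s) \cdot [\p_v u_m(s,V_s,X_s) - \p_v u_m(s,V_s,X'_s)]$ for $\a = 2$, and a compensated Poisson integral with jumps $\uuu_m$ given by \eqref{eq:def_h_m} for $\a \in (1, 2)$.

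Applying It\^o and taking expectations yields
\begin{equation*}
E[\f_n(Z^m_t)] = \l\, E\Big[\int_0^{t\wedge\t_m} \f'_n(Z^m_s) \big(u_m(s,V_s,X_s) - u_m(s,V_s,X'_s)\big) ds\Big] + E[\mathcal{R}_n(t)],
\end{equation*}
with $\mathcal{R}_n(t)$ being the It\^o correction for $\a = 2$ or the L\'evy compensator for $\a \in (1, 2)$. The first term is immediately bounded by $\l \int_0^t E[|Z^m_s|] ds$, using $|\f'_n|\leq 1$ from Lemma \ref{lem:watanabe}-(ii), the Lipschitz estimate \eqref{eq:lip_u}, and \eqref{eq:zm1}. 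Hence the whole work is to show $E[\mathcal{R}_n(t)] \leq C_m t/n$. In the Brownian case ($\a=2$), $\mathcal{R}_n(t) = \frac{1}{2}E[\int_0^{t\wedge\t_m} \f''_n(Z^m_s) \s^2(s,V_s) (\p_v u_m(s,V_s,X_s) - \p_v u_m(s,V_s,X'_s))^2 ds]$, and combining $|\f''_n(z)| \leq \frac{1}{n|z|} \mathbf{1}_{|z|\in[a_n,a_{n-1}]}$ (Lemma \ref{lem:watanabe}-(iii)) with the $1/2$-H\"older estimate \eqref{eq:hold_u}, the boundedness of $\s$ from [{\bf Coe}], and $|X-X'|\leq 2|Z^m|$ from \eqref{eq:zm1} makes the integrand pointwise $O(1/n)$, which yields the claim after integration.

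In the stable case ($\a \in (1, 2)$), $\mathcal{R}_n(t)$ is the L\'evy compensator
\begin{equation*}
E\Big[\int_0^{t\wedge\t_m} \int_\R R_n\big(Z^m_{s-}, \uuu_m(s, V_{s-}, X_s, X'_s; w)\big) |w|^{-1-\a} dw\, ds\Big],
\end{equation*}
with $R_n(z, h) := \f_n(z+h) - \f_n(z) - \f'_n(z) h$. I would split the $w$-integral at a threshold $r$: for $|w| \leq r$, use the Taylor bound $|R_n(z,h)| \leq \frac{1}{2} h^2 \|\f''_n\|_\infty$ combined with the quadratic smallness $|\uuu_m(w)|^2 \leq \kappa^2 w^2 (1 \wedge |X-X'|)$ from \eqref{eq:zm3}; for $|w| > r$, use the trivial bound $|R_n(z,h)|\leq 2|h|$ combined with the integrability $\int_{|w|>r} |w|^{-\a} dw < \infty$, which crucially relies on $\a > 1$. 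An optimal calibration of $r = r_n$, together with the uniform bound $|Z^m|\leq 3m$ valid on the stochastic interval $[0,\t_m]$, should then produce the required $C_m t/n$.

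\textbf{Main obstacle.} The Brownian case is a routine Yamada--Watanabe computation. The genuine difficulty is the stable case: a crude small/large-jump splitting yields bounds of order $(n a_n)^{1-\a}$, which is harmful because $a_n = e^{-n(n+1)}$ decays super-exponentially. Producing the target $1/n$ will require delicately exploiting both the strict localization of $\f''_n$ on $\{|z| \in [a_n, a_{n-1}]\}$ (which indirectly forces $|Z^m|$, and hence $|X-X'|$, to be small on the relevant event) and the quantitative smallness of $\uuu_m(w)$ from Lemma \ref{lem:zm}; the improved H\"older regularity of $\p_v u_m$ furnished by \eqref{eq:schauder} (which for $\a > 1$ and $\g > 0$ gives an exponent strictly larger than $1/2$) is a plausible additional ingredient to refine the bound \eqref{eq:zm3}.
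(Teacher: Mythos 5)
Your overall strategy is the right one (It\^o's formula, three terms, handle the drift term via $|\varphi_n'|\le 1$ and \eqref{eq:lip_u}--\eqref{eq:zm1}, show the martingale term is mean-zero), and the Brownian-case treatment of the correction term is exactly as in the paper. However, in the pure-jump case you have missed the key cancellation, and the decomposition you propose would not need to be repaired by balancing a threshold $r_n$: there is a clean one-line bound.

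The point is that the second-order Taylor remainder should be evaluated \emph{pointwise}, not by $\|\varphi_n''\|_\infty$. Writing $R_n(z,h)=\tfrac12\varphi_n''(\xi)h^2$ with $\xi$ between $z=\phi_m(s,v,x,x')$ and $z+h=\phi_m(s,v,x,x')+\uuu_m(s,v,x,x';w)$, the two-sided bound \eqref{eq:zm3_bis} says that \emph{both} endpoints $|z|$ and $|z+h|$ are bounded below by $\tfrac12|x-x'|$, hence so is $|\xi|$. Therefore Lemma~\ref{lem:watanabe}-(iii) gives $|\varphi_n''(\xi)|\le \frac{2}{n|x-x'|}$, and combining this with $|\uuu_m|^2\le \kappa^2 w^2\,|x-x'|$ from \eqref{eq:zm3}, the factor $|x-x'|$ cancels exactly and you get $|R_n|\le \kappa^2 |w|^2/n$, completely free of $a_n$. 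Integrating against $|w|^{-1-\alpha}\mathds{1}_{\{|w|\le m\}}$ (the jumps are already truncated at $|w|\le m$ by the stopping time $\tau_m$ of \eqref{eq:tau_m}) then yields $C_m/n$ directly, since $\int_{|w|\le m}|w|^{1-\alpha}dw<\infty$ for $\alpha<2$. No small/large-jump splitting or calibration of $r_n$ is required, and the superexponential decay of $a_n$ never enters the estimate. You correctly flagged the $\|\varphi_n''\|_\infty$-based bound as unviable and guessed that the localization of $\varphi_n''$ together with Lemma~\ref{lem:zm} must be combined somehow, but the precise mechanism — that $\xi$ is squeezed between two quantities both bounded below by $|x-x'|/2$, and this reciprocal $|x-x'|$ cancels the $|x-x'|$ in the bound for $|\uuu_m|^2$ — is the step your proposal leaves unresolved.
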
	
Now, by Lemma \ref{lem:watanabe}-{(iv)}, 
we obtain $E\[\f_n(Z^m_t)\] \to E\[|Z^m_t|\]$ as $n\to \infty$. Thus
\begin{equation}
E\[|Z^m_t|\]\leq \l \int_0^t E[|Z^m_s|] ds, \qquad t\in[0,T],
\end{equation}
and Gr\"onwall lemma yields $E\[|Z^m_t|\]=0$. 
Owing 
to estimate \eqref{eq:zm1}, we obtain 
\begin{equation}
E\[|X_{t\wedge\t_m}-X'_{t\wedge\t_m}|\]  \leq 2 E\[|Z^m_t|\] =0.
\end{equation}
Finally, by letting $m\to+\infty$, it follows that $E\[|X_t-X'_t|\]=0$ for any $t\in[0,T]$. This yields \eqref{eq:X_X'_equal_bis} and concludes the proof of Theorem \ref{th:main}-(i).
\hfill\qedsymbol{}

\subsection{Proof of Proposition \ref{prop:bound_phi_Z}}\label{sec:proof_bound_phi_n}\label{sec:proof_th_one_prop}
Denoting by $\tilde J (ds, dw)$ the compensated Poisson measure of the process $L^{(\alpha)}$ {when $\alpha\in (1,2) $}, the process $M^X$ in \eqref{eq:ito_um} reads as
\begin{equation}
M^X_t = 
\begin{cases}
\int_0^t \int_\R \big(u_m(s,V_{s-}+\s(s,V_{s-})w,X_{s})-u_m(s,V_{s-},X_{s})\big) \tilde J (ds, dw)& \\&\hspace{-50pt}\text{if } \alpha\in(1,2),\vspace{5pt} \\ 
\int_0^t  \sigma(s,V_{s})\p_{v}u_m(s,V_{s},X_{s})dW_s & \hspace{-50pt} \text{if } \alpha=2,
\end{cases}
\end{equation}
and the same representation holds for $M^{X'}$. Therefore, by \eqref{eq:diff_X_Xprime}-\eqref{eq:Zm_def}, $Z^m_t$ can be represented as
\begin{equation}
Z^m_t=	\l\int_0^{t\wedge\t_m}\big(u_m(s,V_{s},X_{s})-u_m(s,V_{s},X'_{s})\big)ds  +M_t,\label{DIN_ZM}
\end{equation}
with
\begin{equation}
M_t: = 
\begin{cases}
\int_0^{t\wedge\t_m} \int_{|w|\leq m} \uuu_m(s,V_{s-},X_{s},X'_{s};w)\tilde J (ds, dw) &  \text{if } \alpha\in(1,2),\vspace{5pt} \\ 
\int_0^{t\wedge\t_m}  \sigma(s,V_{s})\big( \p_{v}u_m(s,V_{s},X_{s}) -  \p_{v}u_m(s,V_{s},X'_{s}) \big) dW_s &  \text{if } \alpha=2,
\end{cases}
\end{equation}
and where $\uuu_m(s,v,x,x';w)$ is as defined in \eqref{eq:def_h_m}. {Note that the jumps of $M_t$ are bounded by the definition of $\tau_m$ in \eqref{eq:tau_m}.}


Let $(\f_n)_{n\in\Nb}$ be a sequence approximating the absolute value as in Lemma \ref{lem:watanabe}. Fix $n,m\in\Nb$ and $t\in [0,T]$.
Recalling that $|w|^{-\alpha-1} dw$ is the L\'evy measure of $L^{(\alpha)}$, by {It\^o's formula and \eqref{DIN_ZM}}, we obtain
\begin{equation}
\f_n(Z^m_t ) = I^1_t + I^2_t + I^3_t, \qquad t\in[0,T],
\end{equation}
with 
\begin{align}
		I^1_t	& :=  \l\int_0^{t\wedge\t_m}  \f_n'({Z^m_{s}})\big(u_m(s,V_{s},X_{s})-u_m(s,V_{s},X'_{s})\big) ds, \\
		I^2_t	& := 
		\begin{cases}
		\int_0^{t\wedge\t_m}\! \int_{|w|\leq m} \big(\f_n(Z^m_{s}+\uuu_m(s;w))-\f_n(Z^m_{s})-\f_n'(Z^m_{s})\uuu_m(s;w)\big)|w|^{-1-\alpha}\, dw\, ds &\\&\hspace{-100pt}\text{if } \alpha\in(1,2),\vspace{5pt} \\ 
\frac{1}{2}\int_0^{t\wedge\t_m} \f''_n( Z^m_s) \sigma^2(s,V_{s})\big( \p_{v}u_m(s,V_{s},X_{s}) -  \p_{v}u_m(s,V_{s},X'_{s}) \big)^2 ds & \\&\hspace{-100pt}\text{if } \alpha=2,
		\end{cases}\hspace{-10pt}
		\\
	I^3_t		&:=
	\begin{cases}
	 \int_0^{t\wedge\t_m} \int_{|w|\leq m} \big(\f_n(Z^m_{s-}+\uuu_m(s,V_{s-},X_s,X'_s;w))-\f_n(Z^m_{s-})\big) \tilde J (ds, dw)& \\
	 &\hspace{-50pt}\text{if } \alpha\in(1,2),\vspace{5pt} \\ 
\int_0^{t\wedge\t_m} \f'_n( Z^m_s) \sigma(s,V_{s})\big( \p_{v}u_m(s,V_{s},X_{s}) -  \p_{v}u_m(s,V_{s},X'_{s}) \big) dW_s	& \\&\hspace{-50pt}\text{if } \alpha=2, 
\end{cases}
\end{align} 
where we set $\uuu_m(s;w):=\uuu_m(s,V_{s},X_s,X'_s;w)$ {in $I_t^2$} in order to shorten notation. 
{The integrability of the integrand in $I^2_t$ will be checked below, where the bound for $E[|I^2_t|]$ is proved.}

To prove \eqref{eq:bound_phi_n_Zm}, we need to bound $E[|I^j_t|]$, {$j=1,2$, and show $E[I^3_t]=0$}.

\paragraph{Bound for {$E[|I^1_t|]$}} We have, by triangular inequality,
\begin{align}
|I^1_t|		&\leq  \l\int_0^{t\wedge\t_m}  |\f_n'(Z^m_s)|\ |u_m(s,V_{s},X_{s})-u_m(s,V_{s},X'_{s})|ds \\
&\leq
\l\int_0^t  |\f_n'(Z^m_s)|\ |u_m(s,V_{s},X_{s\wedge\t_m})-u_m(s,V_{s},X'_{s\wedge\t_m})|ds 
\intertext{(by Lemma \ref{lem:watanabe}-(ii) and by the Lipschitz estimates \eqref{eq:lip_u})}
	\leq	& \frac{\l}{2} \int_0^t  |X_{s\wedge\t_m}-X'_{s\wedge\t_m}| ds.
\end{align}
Applying estimate \eqref{eq:zm1} and passing to the expected value yields
\begin{equation}
E[|I^1_t|] \leq {\l}  \int_0^t  E\[|Z^m_{s}|\] ds.
\end{equation}

\paragraph{Bound for {$E[|I^2_t|]$}} 
%
First note that, by applying second order Taylor formula, we can write
\begin{equation}
\f_n\big(z+h \big)-\f_n(z)
-\f_n'(z)h = \frac{1}{2} \f_n''(\xi)h^2, \qquad z,h\in\R,
\end{equation}
where $\xi$ is a point between $z$ and $z+h$. Therefore, Lemma \ref{lem:watanabe}-(iii) and \eqref{eq:zm3_bis} yield
\begin{align}
\hspace{-30pt}&\big| \f_n\big(\phi_m(s,v,x,x')+\uuu_m(s,v,x,x'{;w}) \big)-\f_n\big(\phi_m(t,v,x,x')\big) \\&-\f_n'\big(\phi_m(t,v,x,x')\big)\uuu_m(s,v,x,x';w) \big|\\
 &\leq \frac{{|\uuu_m(s,v,x,x';w)|^2}}{n |x-x'|} \leq \kappa \frac{|w|^2}{n}, \label{eq:estim_tay}
\end{align}
for any $s\in [0,T]$ and $v,w,x,x'\in\R$, with the last inequality stemming from \eqref{eq:zm3}.

Now, if $\alpha\in(1,2)$, we have, by triangular inequality,
\begin{align}
|I^2_t|		&\leq \int_0^{t}\! \int_{|w|\leq m} \big|\f_n(Z^m_{s}+\uuu_m(s;w))-\f_n(Z^m_{s})-\f_n'(Z^m_{s})\uuu_m(s;w)\big| |w|^{-1-\alpha}\, dw\, ds
\intertext{(by the definition of $Z^m$ in \eqref{eq:Zm_def_bis} together with \eqref{eq:estim_tay})}
		&\leq \frac{\kappa}{n} \int_0^t \int_{|w|\leq m}  |w|^{1-\alpha} \, dw\, ds .
\end{align}
Therefore, we proved
\begin{equation}\label{eq:est_val_att_I2}
E[|I^2_t|] \leq \frac{C_m t}{n}
\end{equation}
in the case $\alpha\in (1,2)$. For $\alpha=2$, estimate \eqref{eq:est_val_att_I2} readily follows from Lemma \ref{lem:watanabe}-(iii), together with \eqref{eq:zm1} and \eqref{eq:hold_u}, and because $\sigma$ is bounded.
%
\begin{remark}
{We could here have developed at order 2 up to a given fixed \textit{macro} threshold and at order 1 beyond, separating as usual the small and large jumps and balancing both contributions in $n$ and $m$ (keeping in mind that we want a control which vanishes when $n$ goes to infinity). Actually, for our purpose, the previous computations are sufficient.}
\end{remark}

\paragraph{Null expectation of {$I^3_t$}} By Lemma \ref{lem:watanabe}-(ii), together with estimate \eqref{eq:zm3}, we have

\begin{align}
	&E\[\int_0^{T\wedge \tau_m} \int_{|w|\leq m} \big(\f_n(Z^m_{s}+\uuu_m(s,V_{s},X_{s},X'_{s};w))-\f_n(Z^m_{s})\big)^2 |w|^{-1-\alpha}\, dw\,ds\] 
	\\
	&\leq \kappa\, T \int_{|w|\leq m}|w|^{1-\alpha}\, dw , 
	\end{align}
and, since $\sigma$ and $u_m$ are bounded, we also have
\begin{equation}
E\[ \int_0^{T\wedge\t_m} \Big(\f'_n( Z^m_s) \sigma(s,V_{s})\big( \p_{v}u_m(s,V_{s},X_{s}) -  \p_{v}u_m(s,V_{s},X'_{s}) \big)\Big)^2 ds \] <\infty.
\end{equation}
%
Therefore, $I^3_t$ is a square-integrable zero-mean martingale {for $\a\in(1,2]$},  { i.e. $E[I^3_t] = 0$}.

This concludes the proof of \eqref{eq:bound_phi_n_Zm}. 
\hfill\qedsymbol{}


\section{Multi-dimensional diffusive setting
}\label{sec:multi_dim}

The main goal of this section is proving Theorem \ref{th:main}-(ii). 
Before dwelling into its proof, we discuss some examples of drift functions $F$ that either meet or fail to meet Hypothesis [{\bf Pea}]. 

\subsection{Peano-like models}\label{sec:peano}\label{sec:multi_dim_prelim}
Consider the drift power-function
\begin{equation}\label{eq:power_func}
F(t,x) : = a(t) |x|^{\beta}, \qquad (t,x)\in [0,T]\times \R^d,
\end{equation}
which gives rise to the Peano-like model
\begin{equation}\label{eq:sde_pea}
\begin{cases}
  dV_{t}=\m(t,V_{t})dt+\s(t,V_{t})d W_t,\\
  dX_{t}=(V_{t} + a(t) |X_t|^{\beta} ) dt.	
\end{cases}
\end{equation}
Note that this system was employed in \cite[Section 4]{MR3808994}, precisely with $V_t=W_t$, to build a counter-example for weak-uniqueness when $\beta<1/3$ {(see also \cite{MR4591369} for a an extension of this counter-example to the pure-jump stable case)}. {We can also refer to \cite{MR4358660} in the Brownian setting for a generalized model in which the Brownian noise propagates through a chain of differential equation and \cite{MR4591369} for the stable counterpart}.
\begin{proposition}\label{prop:peano}
Let $d\geq1$, $\beta \in (1/3,1]$, $a\in L^{\infty}([0,T])$. Then $F$ in \eqref{eq:power_func}  
satisfies 
Hypothesis $[{\bf Pea}]$. 
\end{proposition}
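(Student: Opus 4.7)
The plan is to verify \textbf{[Pea]} by a direct computation, leveraging the fact that the mollifier convolution can be absorbed into an extra bounded independent shift inside a probabilistic expectation. First, I would compute the gradient explicitly. By integration by parts (justified since $|z|^{\beta-1}$ is locally integrable: recall $\beta>1/3 > 1-d$), for $F(t,z)=a(t)|z|^\beta$ one has
\eqstar{
\nabla_y F^{(\eps)}(s,y) = a(s)\int_{\R^d} \Phi_{\eps}(y-z)\,\nabla_z |z|^{\beta}\,dz,
}
so that $|\nabla_y F^{(\eps)}(s,y)|\leq \|a\|_\infty\beta\bigl(|\cdot|^{\beta-1}\ast \Phi_\eps\bigr)(y)$, with $\ast$ denoting spatial convolution.

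Next, set $\tau:=s-t$ and write $\sigma^2:=\lambda\tau^3$. Applying Fubini's theorem, the quantity to be controlled becomes
\eqstar{
I(\tau,\theta,\eps):=\int_{\R^d}\bigl(|\cdot|^{\beta-1}\ast\Phi_\eps\bigr)(y)\,\Gamma(\sigma^2,y-\theta)\,dy = \Eb\Bigl[\,\bigl|\theta - U - G\bigr|^{\beta-1}\,\Bigr],
}
where $U$ has density $z\mapsto \Phi_\eps(-z)$ (so $|U|\leq \eps$ almost surely) and $G\sim\mathcal{N}(0,\sigma^2 I_d)$ are independent. The key trick is to condition on $U$ and exploit the symmetry $G\stackrel{d}{=}-G$: this gives
\eqstar{
I(\tau,\theta,\eps) = \Eb\bigl[\psi_\sigma(\theta - U)\bigr] \leq \sup_{x\in\R^d}\psi_\sigma(x),\qquad \psi_\sigma(x):=\Eb\bigl[|x+G|^{\beta-1}\bigr],
}
and the right-hand side is manifestly independent of both $\theta$ and $\eps$. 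This reduction is the heart of the argument and resolves what I expect to be the main obstacle, namely uniformity in $\eps$.

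The last analytical step is to show that $\sup_x \psi_\sigma(x)=C\sigma^{\beta-1}$ for a constant $C=C(\beta,d)$. By scaling $G=\sigma W$ with $W\sim\mathcal{N}(0,I_d)$, one has $\psi_\sigma(x)=\sigma^{\beta-1}\psi_1(x/\sigma)$, and $\psi_1$ is (i) continuous on $\R^d$ by dominated convergence, (ii) finite at $0$ since $\beta-1>-d$ makes $\Eb[|W|^{\beta-1}]<\infty$, and (iii) tends to $0$ as $|x|\to\infty$ because $\beta-1\leq 0$ and $|x+W|\to\infty$ in probability. Hence $\psi_1$ attains a finite supremum, yielding $I(\tau,\theta,\eps)\leq C\,\sigma^{\beta-1}=C\,(\lambda\tau^3)^{(\beta-1)/2}$.

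Combining the two estimates produces the uniform bound
\eqstar{
\int_{\R^{d}}|\nabla_y F^{(\eps)}(s,y)|\,\Gamma(\lambda(s-t)^3, y-\theta)\,dy \leq C\,\|a\|_\infty\,\lambda^{(\beta-1)/2}\,(s-t)^{3(\beta-1)/2}=:g(s-t).
}
Finally, $g\in L^1([0,T])$ iff $3(\beta-1)/2 > -1$, i.e. iff $\beta>1/3$, which is precisely the assumption. This verifies \textbf{[Pea]} and simultaneously explains why $1/3$ is the sharp threshold for this class of drifts: it matches exactly the integrability exponent dictated by the kinetic Gaussian scaling $\tau^{3/2}$.
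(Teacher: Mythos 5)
Your proof is correct and reaches exactly the same final bound as the paper, $g(s-t)\sim\|a\|_\infty\lambda^{(\beta-1)/2}(s-t)^{3(\beta-1)/2}$, but the route is genuinely different in how it handles the mollifier and the scaling. The paper first proves the pointwise estimate $|\nabla F^{(\eps)}(s,y)|\lesssim|y|^{\beta-1}$ \emph{uniformly in} $\eps$ (which, done carefully, needs a dichotomy for $|y|$ small versus large relative to the support radius of $\Phi_\eps$ -- a point the paper's displayed computation slightly glosses over), and then estimates $\int|y|^{\beta-1}\Gamma(\lambda(s-t)^3,y-\theta)\,dy$ by splitting at the kinetic scale $|y|\lessgtr(s-t)^{3/2}$ and treating the far and near regions separately (total mass $1$ for the far part, peak value of the Gaussian plus polar coordinates for the near part). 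You instead never bound $\nabla F^{(\eps)}$ pointwise: you push $\Phi_\eps$ through Fubini into the expectation as an independent bounded shift $U$, condition on $U$ to dominate by $\sup_x\psi_\sigma(x)$ (so $\eps$-uniformity is automatic and requires no dichotomy), and extract the $\sigma^{\beta-1}$ factor cleanly from Gaussian self-similarity together with continuity and decay of $\psi_1$. This is a tidier way to make uniformity in $\eps$ manifest and it makes the sharpness of the threshold $\beta>1/3$ transparent as a pure consequence of the $\tau^{3/2}$ kinetic scaling. Two small points worth tightening: what you call ``integration by parts'' is just the identity $\nabla(f*\Phi_\eps)=(\nabla f)*\Phi_\eps$ for locally integrable $\nabla f$; and the claim $\psi_1(x)\to0$ as $|x|\to\infty$ fails at $\beta=1$ (there $\psi_1\equiv1$) and for $\beta<1$ should be justified by dominated convergence rather than convergence in probability alone -- but in either case $\sup_x\psi_1(x)<\infty$, which is all you use, so the conclusion stands.
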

\begin{proof}
It is enough to prove that
\begin{equation}\label{eq:ineq_peano1}
\int_{\R^d}|y|^{{\beta}-1} \Gamma((s-t)^3,y-\theta) dy \lesssim  (s-t)^{\frac{3}{2}({\beta} -1)}, \qquad t<s,\ \theta\in\R^d.
\end{equation}
Indeed, 
recalling $F^{(\eps)}(s,y)=\int_{\R^d} F(s,y-z) \Phi_\varepsilon(z) dz$, and denoting by $\nabla F (s,\cdot)$ the gradient of $F (s,\cdot)$, 
we obtain
\begin{equation}
|\nabla F^{(\eps)}(s,{y})|  \leq  \|a\|_\infty \int_{\R^d} |{y} -z|^{\beta-1} \frac{1}{\eps^d} \Phi(z/\eps) dz  = 
{\|a\|_{L^\infty}} \int_{\R^d} |y - \xi |^{\beta-1} \Phi(\xi) d\xi,
\end{equation}
for any 
$y\in\R^d$. Now, the last integral above is a continuous function of $y$ and
\begin{equation}
 \int_{\R^d} |y - \xi |^{\beta-1} \Phi(\xi) d\xi \leq (|y|-r)^{\beta-1}\lesssim |y|^{\beta - 1}, \qquad |y|>>{2 r},
\end{equation}
where $r>0$ is such that the support of $\Phi$ is contained in the Euclidean ball with radius $r$.
This yields 
\begin{equation}\label{eq:estimate_der_Feps}
|\nabla_y F^{(\eps)}(s,y)| \lesssim |y|^{\b-1} 
, \qquad (s,y)\in [0,T]\times \R^d,\quad \eps>0.
\end{equation}
This, together with \eqref{eq:ineq_peano1}, implies \eqref{eq:moment_estimate} with $g(s-t) \sim {\lambda^{\frac{\beta-1}{2}}} (s-t)^{\frac{3}{2}{(\beta -1)}}$.

To show \eqref{eq:ineq_peano1}, 
we decompose the left-hand side therein as
\begin{equation}\label{eq:peano_decomp}
			     =  \underbrace{\int_{|y|\geq(s-t)^{\frac 3 2}} |y|^{{\beta}-1} \Gamma\big((s-t)^3,y-\theta\big) dy}_{=:I_1
			     }
			     	+   \underbrace{ \int_{|y|< (s-t)^{\frac 3 2}} |y|^{{\beta}-1} \Gamma\big((s-t)^3,y-\theta\big) dy}_{=:I_2
				}.
\end{equation}
First, we note that 
\begin{equation}
I_1\leq  \int_{|y|\geq(s-t)^{\frac 3 2}} (s-t)^{\frac 3 2 ({\beta}-1)} \Gamma\big((s-t)^3,y-\theta\big) dy\leq (s-t)^{\frac 3 2 ({\beta}-1)},
\end{equation}
for $\Gamma((s-t)^3,\cdot-\theta)$ is a probability density. 
On the other hand
\begin{align}
I_2&\leq  {(s-t)^{-\frac{3}{2}d}} \int_{|y|< (s-t)^{\frac 3 2}} {|y|^{{\beta}-1}}\exp\left( -\frac{|y-\theta|^2}{2(s-t)^3} \right) dy \\
	      &
	      \leq  {(s-t)^{-\frac{3}{2}d}} \int_{|y|< (s-t)^{\frac 3 2}}{|y|^{{\beta}-1}} dy 
\intertext{(using polar coordinates, with $|y|=r$)}
	      &\lesssim  {(s-t)^{-\frac{3}{2}d}} \int_{0}^{(s-t)^{\frac 3 2}} {r^{d-2+{\beta}}} dr \\&
	      \lesssim   (s-t)^{\frac{3}{2}({\beta}-1)}
.
\end{align}
\end{proof}
\begin{remark}\label{rem_1d_cou}
Note that the condition $\beta>1/3$ is crucial in order for $F$ in \eqref{eq:power_func}  to satisfy [{\bf Pea}]. 
Indeed, more generally than \eqref{eq:peano_decomp}, one could decompose the left-hand side in \eqref{eq:ineq_peano1} as
\begin{equation}
			     =  \underbrace{\int_{|y|\geq(s-t)^\rho} |y|^{{\beta}-1} \Gamma\big((s-t)^3,y-\theta\big) dy}_{=:I_1
			     }
			     	+   \underbrace{ \int_{|y|< (s-t)^\rho} |y|^{{\beta}-1} \Gamma\big((s-t)^3,y-\theta\big) dy}_{=:I_2
						},
\end{equation}
with an arbitrary $\rho >0$. Performing the same computations as before, this would lead to
\begin{equation}
I_1 \leq (s-t)^{\rho ({\beta}-1)} , \qquad I_2 \lesssim (s-t)^{\rho (d-1+{\beta})-\frac{3}{2}d}.
\end{equation}
Hence, in order for both $I_1$ and $I_2$ be integrable on $[0,T]$, the following inequalities need to hold:
\begin{equation}
\frac{\frac{3}{2}d - 1}{d+{\beta}-1} <\rho <\frac{1}{1-{\beta}}.
\end{equation}
However, a direct computation shows that the interval above is non-empty if and only if ${\beta}>1/3$. In that case, $\rho=3/2$ satisfies the inequalities. Therefore, by imposing Hypothesis [{\bf Pea}] on a power-function $F$ we recover the known regularity threshold for weak-uniqueness. 
\end{remark}

{Following }
the proof of the previous proposition, it is easy to build a {class} of functions $F$, with a local behavior like in \eqref{eq:power_func}, for which Hypothesis $[{\bf Pea}]$ remains satisfied. For instance, one could consider the function given by 
\begin{equation}
F(t,x) : = \sum_{n\in\mathbb{N}} a_n(t) |x-b_n|^{\beta_n}, \qquad (t,x)\in [0,T]\times \R^d,
\end{equation}
with each $b_n\in\R^d$, {$\beta_n \in [1/3+\d , 1]$ for $\d>0$} suitably small, and with
\begin{equation}
\sum_{n\in\mathbb{N}} |a_n| \in L^{\infty}([0,T]).
\end{equation}
On the other hand, by letting infinite singularities of the gradient (with equal magnitude) accumulate not too fast around a given point, even in dimension one it is possible to build a function $F$ that is $\beta$-H\"older continuous and almost everywhere differentiable, but with non-summable $\nabla F$
. Consider $d=1$ and define
\begin{equation}\label{eq:counter_ex_peano}
F(x)=\sum_{n\in\Nb}F_n(x) \mathds{1}_{[a_n,a_{n+1})}(x), \qquad x\in\R,
\end{equation}
with 
\begin{equation}\label{eq:counter_ex_peano_2}
F_n(x)=
\begin{cases}
(x-a_n)^\b & \text{if } x \in [a_n,\frac{a_n+a_{n+1}}{2}) ,\\
(a_{n+1}-x)^\b & \text{if } x \in [\frac{a_n+a_{n+1}}{2},a_{n+1}) ,
\end{cases}
\end{equation}
where $(a_n)_{n\in\Nb}$ is a strictly increasing sequence in $\R$. Next we show that if the sequence $a_n$ converges not too fast, for instance if $a_{n+1} - a_{n} \sim n^{-1/\beta}$, {then} Hypothesis [{\bf Pea}] is violated. 

\begin{figure}[htp]
\centering
\includegraphics[width=0.7\textwidth]{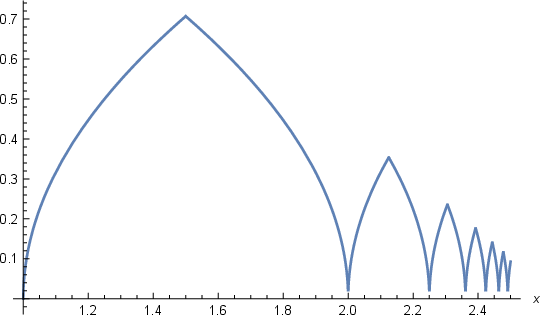}
\caption{Plot of $F(x)$ as in \eqref{eq:counter_ex_peano}-\eqref{eq:counter_ex_peano_2}, with $a_1=1$, $a_{n+1} - a_{n} = n^{-1/\beta}$ and $\beta=1/2$.} 
\label{fig:plot}
\end{figure}

%
%


We have the following
\begin{proposition}[``Counterexample" to Hypothesis $\[{\bf Pea}\]$]\label{prop:counterexample}
For any $\b\in(0,1]$, the function $F$ as defined in \eqref{eq:counter_ex_peano}-\eqref{eq:counter_ex_peano_2} does not satisfy Hypothesis $\[{\bf Pea}\]$ if the sequence $(a_n)_{n\in\Nb}$ is bounded (and thus has a limit) and
\begin{equation}
\sum_{n\in\Nb} ({a_{n+1}-a_n})^\b = + \infty.
\end{equation}
\end{proposition}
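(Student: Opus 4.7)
The plan is to argue by contradiction: suppose $F$ defined by \eqref{eq:counter_ex_peano}-\eqref{eq:counter_ex_peano_2} satisfies Hypothesis $[{\bf Pea}]$. Since $(a_n)_{n\in\Nb}$ is strictly increasing and bounded, $a_\infty := \lim_n a_n \in \R$ is well-defined. Fixing any $\l > 0$ and taking $\theta = a_\infty$ in \eqref{eq:moment_estimate}, the assumption would produce a non-negative $g \in L^1([0,T])$ such that
\begin{align*}
\int_{\R} |\nabla_y F^{(\eps)}(y)| \, \G\bigl(\l (s-t)^3, y - a_\infty\bigr) dy \leq g(s-t),
\qquad 0 \leq t < s \leq T, \quad \eps > 0.
\end{align*}
The strategy is to pass to the limit as $\eps \to 0^+$ to obtain the same bound with $F^{(\eps)}$ replaced by $F$, and then show that this limiting integral is $+\infty$ for every $s-t>0$, contradicting the integrability of $g$.

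For the first step, I would observe that $F$ is classically $C^1$ on the open co-countable set $\Omega := \R \setminus \bigl(\{a_n\}_{n\in\Nb} \cup \{(a_n+a_{n+1})/2\}_{n\in\Nb} \cup \{a_\infty\}\bigr)$, because on each connected component $F$ equals either $0$ or a shifted, reflected pure power $|x-c|^{\b}$. Standard mollifier properties then yield $\nabla F^{(\eps)} \to F'$ uniformly on each compact subset of $\Omega$, hence Lebesgue-almost everywhere on $\R$. Applying Fatou's lemma gives
\begin{align*}
\int_{\R} |F'(y)| \, \G\bigl(\l (s-t)^3, y - a_\infty\bigr) dy \leq g(s-t) < +\infty
\end{align*}
for almost every $0 \leq t < s \leq T$.

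To reach a contradiction, the fundamental theorem of calculus applied to the two monotone halves of $F_n$ yields $\int_{a_n}^{a_{n+1}} |F'(y)| dy = 2^{1-\b}(a_{n+1} - a_n)^\b$ on each bump. Fixing any $s-t > 0$, I choose $N$ so large that $|a_n - a_\infty|^2 \leq \l(s-t)^3$ for every $n \geq N$ (possible since $a_n \to a_\infty$). Then on each $[a_n, a_{n+1}]$ with $n \geq N$ the one-dimensional Gaussian density from \eqref{eq:standard_gauss} is bounded below by $c_\l (s-t)^{-3/2}$, where $c_\l > 0$ depends only on $\l$. Summing the bump contributions yields
\begin{align*}
\int_{\R} |F'(y)| \, \G\bigl(\l (s-t)^3, y - a_\infty\bigr) dy \geq c_\l (s-t)^{-3/2} \cdot 2^{1-\b} \sum_{n \geq N}(a_{n+1} - a_n)^\b = +\infty
\end{align*}
by the divergence assumption, which is the desired contradiction.

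The main technical delicacy I anticipate is justifying the pointwise a.e.~convergence in the Fatou step: it crucially exploits that $F$ fails to be in $W^{1,1}_{loc}$ only in a neighborhood of the single accumulation point $a_\infty$, while elsewhere $F$ is classically $C^1$ on each half-bump, so for $\eps$ small enough the convolution integral $\nabla F^{(\eps)}(y) = \int F'(y-z)\Phi_\eps(z)\,dz$ reduces to an integration of a continuous function on a neighborhood where $F$ is smooth, and convergence is elementary.
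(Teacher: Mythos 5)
Your proof is correct and follows essentially the same route as the paper: pass from $\nabla F^{(\eps)}$ to $F'$ via a.e.\ convergence of the mollified derivative plus Fatou's lemma, then show the resulting integral $\int |F'(y)|\Gamma(\cdot,y-\theta)\,dy$ diverges by bounding the Gaussian from below and summing the bump contributions $\int_{a_n}^{a_{n+1}}|F_n'|\,dy \gtrsim (a_{n+1}-a_n)^\beta$. The only cosmetic difference is that you fix $\theta=a_\infty$ and restrict to a tail $n\geq N$ where $|a_n-a_\infty|^2\leq\lambda(s-t)^3$, whereas the paper simply observes that $F'$ has compact support and bounds $\Gamma(\tau,\cdot-\theta)$ below on all of $[a_1,a_\infty]$ for arbitrary $\theta$, which is marginally shorter; both yield the same contradiction.
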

\begin{proof}
{We recall Notation \ref{not:moll}; since} $ F^{(\eps)'} \to F'$ as $\eps\to 0^+$, almost everywhere, by Fatou's lemma it is enough to show that
\begin{equation}
\int_{\R} |F'(y)| \Gamma(\tau,y-\theta) dy = \infty, \quad \tau\in (0,1], \ \theta\in\R.
\end{equation}
We have
\begin{equation}
F'(x)=\sum_{n\in\Nb}F_n'(x) \mathds{1}_{[a_n,a_{n+1})}(x),
\end{equation}
with 
\begin{equation}
F_n'(x)= \b
\begin{cases}
(x-a_n)^{\b-1} & \text{if } x \in (a_n,\frac{a_n+a_{n+1}}{2}),\\
-(a_{n+1}-x)^{\b-1} & \text{if } x \in [\frac{a_n+a_{n+1}}{2},a_{n+1}).
\end{cases}
\end{equation}
Noticing that $F'$ has compact support, we can bound the Gaussian density from below and obtain
\begin{align}
\int_{\R} |F'(y)| \Gamma(\tau,y-\theta) dy
& \gtrsim   \int_{a_1}^{a_{\infty}} |F'(y)| dy \gtrsim \sum_{n\in\Nb} \int_{a_n}^{\frac{a_n+a_{n+1}}{2}} (y-a_n)^{\b-1} dy \\&\gtrsim \sum_{n\in\Nb} ({a_{n+1}-a_n})^\b = +\infty.
 \end{align}
\end{proof}


\begin{remark}
Note that the function $F$ in \eqref{eq:counter_ex_peano}-\eqref{eq:counter_ex_peano_2} is only a counter-example to Hypothesis [{\bf Pea}] and not to strong uniqueness of \eqref{eq:sde}-\eqref{eq:sde_x}. Indeed, $F$ is $\beta$-H\"older continuous and thus fits the assumptions of Theorem \ref{th:main}-(i) if $\beta>1/(1+\alpha)$ {(in particular with $\alpha=2$)}. Therefore, \eqref{eq:sde}-\eqref{eq:sde_x} is strongly well-posed if $\mu$ and $\sigma$ satisfy the remaining assumptions. {Therefore, Proposition \ref{prop:counterexample} shows that [{\bf Pea}] may fail in a model that is strongly well-posedness, and thus highlights a limit of the proof technique.} 

The problem of finding a $\beta$-H\"older continuous function $F$, with $\beta>1/(1+\alpha)$, for which pathwise-uniqueness holds for \eqref{eq:sde} but fails for \eqref{eq:sde_x} remains completely open.
\end{remark}

\subsection{Proof of Theorem \ref{th:main}-(ii)}\label{sec:multi_dim_core}
Throughout this section we let the dimension $d\in\mathbb{N}$ be fixed, as well as {the} coefficient functions  $\sigma:[0,T] \times \R^d \to \mathcal{M}^{d\times d}$ and $\mu,F:[0,T]\times\R^d\to \R^d$ satisfying the assumptions of Theorem \ref{th:main}-(ii). In the diffusive setting (i.e. $\alpha=2$), we need to prove strong well-posedness for Eq. \eqref{eq:sde_x} under the assumption that strong well-posedness holds for Eq. \eqref{eq:sde}. We re-write here the system for the reader's convenience:
 \begin{align}\label{eq:sde_diff_V}
  dV_{t}&=\m(t,V_{t})dt+\s(t,V_{t})dW_{t},\\
  dX_{t}&=(V_{t} + F(t,X_t) ) dt.	\label{eq:sde_diff_X}
\end{align}

By Proposition \ref{prop:density_bound}, {system} \eqref{eq:sde_diff_V}-\eqref{eq:sde_diff_X} is weakly solvable. Therefore, by {Yamada-Watanabe theorem} (see \cite[Theorem 1.1, Chapter IV]{ikeda2014stochastic}), we only have to show pathwise uniqueness. To this end, we fix hereafter: 
\begin{itemize}
\item A filtered probability space $(\Omega, \F, (\F_t)_{t\in[0,T]}, \Pb)$ satisfying the usual assumptions of completeness and right-continuity on the filtration, and a $d$-dimensional (uncorrelated) Brownian motion $(W_t)_{t\in[0,T]}$ with respect to $(\F_t)_{t\in[0,T]}$;
\item A solution $(V_t)_{t\in[0,T]}$ to \eqref{eq:sde_diff_V}, which is, by assumption, 
pathwise unique up to fixing the initial condition $V_0$; 
\item Two solutions $(X_t)_{t\in[0,T]}$ and $(X'_t)_{t\in[0,T]}$ to \eqref{eq:sde_diff_X}, {defined on $(\Omega, \F, (\F_t)_{t\in[0,T]}, \Pb)$}, such that $X_0 = X'_0$ $\Pb$-almost surely.
\end{itemize}
We need to prove
\begin{equation}\label{eq:X_X'_equal}
X_\tau = X'_{\tau} \quad \Pb\text{-a.s.}, \qquad \tau\in (0,T].
\end{equation}

Fix now $\tau\in (0,T]$ and $\eps>0$. For any $\omega\in\Omega$, 
consider the random transport equation
\begin{equation}\label{eq:transport_random_eps}
\begin{cases}
\partial_t u_{\eps} (t,x) +  \nabla_x u_{\eps} (t,x) \big(  V_t(\omega) +  F^{(\eps)}( t, x )  \big)  = F^{(\eps)}(t, x)     ,  \quad  (t,x)\in(0,\tau)\times\R^d  , \\
u_{\eps}(\tau,\cdot) \equiv 0. 
\end{cases}
\end{equation}
The latter can be viewed as a family of deterministic transport equations, indexed by the trajectories of $V$.
We have the following
\begin{lemma}\label{lemm:classical_sol_transp}
Let $\tau\in (0,T]$, $\eps>0$ and $\omega\in\Omega$. The transport equation \eqref{eq:transport_random_eps} has a unique classical solution, namely a continuous function  $u_{\eps}=u_{\eps,\tau,\omega} : [0,\tau] \times \R^d \to \R^d$ such that $\nabla_x u_{\eps}(t,x)$ is continuous on $[0,\tau] \times \R^d$ and 
\begin{equation}
u_{\eps} (t,x) = -  \int_{t}^\tau g_{\eps}(s,x) ds 
, \qquad (t,x) \in [0,\tau]\times \R^d,
\end{equation}
with
\begin{equation}\label{eq:def_g}
g_{\eps}(s,x) :=   F^{(\eps)}(s, x) -  \nabla_x u_{\eps} (s,x) \big(  V_s(\omega) +  F^{(\eps)}( s, x )  \big).
\end{equation}
In particular, the function $g_{\eps}(s,x)$ is continuous in the $x$ variable and bounded on $[0,\tau]\times \R^d$.
%
\end{lemma}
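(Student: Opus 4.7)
The plan is to use the method of characteristics. Because $F^{(\eps)}$ is $C^\infty$ in $x$ with $\nabla_x F^{(\eps)}$ bounded uniformly in $(s,x)$ for fixed $\eps$ (a standard mollification estimate using the compact support of $\Phi$ and the H\"older regularity of $F$ gives $|\nabla_x F^{(\eps)}|\lesssim \eps^{\beta-1}|F|_{C^\beta}$), and because $s\mapsto V_s(\omega)$ is continuous on $[0,T]$ for almost every $\omega$ (as $V$ has continuous trajectories in the diffusive case), the drift $(s,x)\mapsto V_s(\omega)+F^{(\eps)}(s,x)$ is Lipschitz continuous in $x$ with a time-measurable bound. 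Classical Cauchy--Lipschitz ODE theory then yields a unique global flow
\begin{equation*}
\xi^{\eps}_{s,t}(x), \qquad 0\le t\le s\le \tau,
\end{equation*}
which is $C^1$ in $x$, with Jacobian $\nabla_x \xi^{\eps}_{s,t}$ uniformly bounded in $(s,x)$ via a Gronwall argument applied to the linearized ODE.

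Next, I define
\begin{equation*}
u_\eps(t,x):=-\int_t^\tau F^{(\eps)}\big(s,\xi^{\eps}_{s,t}(x)\big)\,ds
\end{equation*}
and check that it solves \eqref{eq:transport_random_eps}. Differentiation under the integral sign is justified by the two boundedness estimates of the previous paragraph, producing a jointly continuous $\nabla_x u_\eps$. A direct chain-rule computation of $\frac{d}{ds}\big[u_\eps(s,\xi^{\eps}_{s,t}(x))\big]$, together with the endpoint condition $u_\eps(\tau,\cdot)\equiv 0$, confirms that $u_\eps$ is a classical solution of \eqref{eq:transport_random_eps}. Uniqueness is built into the same representation: any classical solution must satisfy this identity along characteristics and hence coincide with $u_\eps$.

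Finally, the representation $u_\eps(t,x)=-\int_t^\tau g_\eps(s,x)\,ds$ with $g_\eps$ as in \eqref{eq:def_g} is a direct rereading of the PDE. Indeed, \eqref{eq:transport_random_eps} rearranges to $\partial_t u_\eps(t,x)=g_\eps(t,x)$, and integrating from $t$ to $\tau$ using $u_\eps(\tau,\cdot)\equiv 0$ yields the claim. Continuity of $g_\eps(s,\cdot)$ in $x$ follows from the corresponding continuity of $F^{(\eps)}(s,\cdot)$ and $\nabla_x u_\eps(s,\cdot)$. Boundedness of $g_\eps$ on $[0,\tau]\times\R^d$ follows from: (i) the uniform-in-$x$ bound on $\nabla_x u_\eps$ coming from the Gronwall estimate on $\nabla_x\xi^{\eps}_{s,t}$ and $\|\nabla_x F^{(\eps)}\|_\infty<\infty$; (ii) the boundedness of the continuous path $s\mapsto V_s(\omega)$ on the compact interval $[0,\tau]$; and (iii) an $L^\infty$ control on $F^{(\eps)}$ for each fixed $\eps$, which may be obtained by a further cutoff that does not affect the local analysis used in the sequel.

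The whole argument is essentially soft, and the main (mild) care is in bookkeeping the $\eps$-dependence of the constants; this is harmless at this stage because the lemma is stated at fixed $\eps>0$. The genuinely delicate $\eps$-uniform estimate on $\nabla_x u_\eps(t,X_t)$, which uses the Gaussian density bound of Proposition \ref{prop:density_bound} and Hypothesis [{\bf Pea}], is not part of this lemma and is deferred to the proof of Theorem \ref{th:main}-(ii).
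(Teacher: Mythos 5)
Your proof follows the same method-of-characteristics route that the paper's terse argument implicitly invokes, representing $u_\eps$ by integrating $F^{(\eps)}$ along the flow of the ODE $\dot\xi = V_\cdot(\omega)+F^{(\eps)}(\cdot,\xi)$, and then reading the PDE as $\partial_t u_\eps = g_\eps$. You handle the merely measurable time dependence of $F^{(\eps)}(\cdot,x)$ via Carathéodory ODE theory rather than via the paper's suggested regularization in time; both are legitimate and essentially equivalent.

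One remark on the last point: your item (iii), asserting boundedness of $F^{(\eps)}$ through a "further cutoff," does not establish global boundedness of $g_\eps$ on $[0,\tau]\times\R^d$. Since $F\in L^\infty_T C^{(1+\gamma)/3}$ is only a homogeneous Hölder class, $F^{(\eps)}$ is smooth with bounded gradient but has genuinely sub-linear growth in $x$, and a cutoff on $F^{(\eps)}$ modifies the transport equation itself. With $\nabla_x u_\eps$ bounded (your Gronwall estimate), $g_\eps(t,x)=F^{(\eps)}(t,x)-\nabla_x u_\eps(t,x)\bigl(V_t(\omega)+F^{(\eps)}(t,x)\bigr)$ inherits sub-linear growth rather than boundedness. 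This is an overclaim already present in the lemma's phrasing; what the sequel (Remark \ref{rem:Ito} and the pathwise Itô formula applied to $u_\eps(t,X_t)$) actually requires is only continuity in $x$ and local boundedness, which your characteristics construction delivers directly. Stating local boundedness would be the cleaner conclusion, rather than appealing to a cutoff.
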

\begin{proof}
Note that, $F^{(\eps)}(t,\cdot)$ is smooth and has sub-linear growth, uniformly with respect to $t\in[0,T]$, in light of the assumption $F \in L^\infty_TC^{\frac{1+\g}{3}}$. Therefore, the statement is standard under the extra assumption that $F(t,x)$ is continuous in both variables ({see e.g. \cite{evans:book}}
). In general, it follows from a straightforward regularization argument in the time variable. We omit the details for brevity.
\end{proof}
\begin{remark}[{It\^o's} formula]\label{rem:Ito}
The function $u_{\eps}$ possesses enough regularity to apply {deterministic It\^o formula, which is the standard chain rule}. Precisely, for any function $\xi:[0,\tau]\to \R^d$ that is continuous and {with} bounded variation, we have
\begin{equation}
u_{\eps}\big(s, \xi(s) \big) = u_{\eps}\big(0, \xi(0) \big) + \int_{0}^{s} g_{\eps}\big(t, \xi(t) \big)  d t + \int_{0}^{s} \nabla_x u_{\eps} \big(t,\xi(t)\big)     d \xi(t) , \qquad s\in [0,\tau].
\end{equation}
In particular, owing to the fact that $X(\omega)$ solves the ODE \eqref{eq:sde_diff_X}, and thus is absolutely continuous on $[0,\tau]$, and utilizing $u_{\eps}(\tau, \cdot)\equiv 0$, we have 
\begin{equation}\label{eq:Ito_X}
u_{\eps}\big(0, X_0(\omega) \big) 
=  - {\int_{0}^{\tau}  g_{\eps}(t, X_t(\omega) ) d t }  -  \int_{0}^{\tau}  \nabla_x u_{\eps} (t,X_t(\omega))  \big(  V_t(\omega) +  F( t, X_t(\omega) )  \big) d t ,
\end{equation}
for any $\omega\in\Omega$. 
Obviously, the same formula holds replacing $X$ with $X'$.
\end{remark}
Omitting, for brevity, the explicit dependence on $\omega$ in $X$ and $u_{\eps}$, by \eqref{eq:def_g} we have
\begin{align}
\int_0^{\tau} F^{(\eps)}(t,X_t)  dt & = \int_0^{\tau} \Big( g_\eps (t,X_t) +\nabla_x u_{\eps} (t,X_t)  \big({ V_t + F^{(\eps)}( t, X_t ) } \big)   \Big) dt 
\intertext{(by It\^o formula \eqref{eq:Ito_X})} 
& = 
\int_0^{\tau} \nabla_x u_{\eps} (t,X_t)   \big( F^{(\eps)}(t, X_t )-F( t, X_t ) \big)   dt - u_{\eps}(0,X_0), \quad \omega\in\Omega.
\end{align}
We can perform the same computations replacing $X$ with $X'$. Therefore, owing to the fact that $X_0 = X'_0$ $\Pb$-almost surely, we obtain
\begin{align}\hspace{-25pt}
X_{\tau} - X'_{\tau} &= \int_0^{\tau} F(t,X_t)  dt - \int_0^{\tau} F(t,X'_t) dt \\ 
& = \int_0^{\tau} \nabla_x u_{\eps} (t,X_t)  \big(  F^{(\eps)}(t, X_t )-F(t, X_t ) \big)   dt +  \int_0^{\tau} \big(  F(t, X_t )-F^{(\eps)}(t, X_t ) \big) dt \\
&\quad + \int_0^{\tau}  \nabla_x u_{\eps} (t,X'_t)   \big(  F^{(\eps)}(t, X'_t )-F(t, X'_t ) \big) dt +  \int_0^{\tau} \big(  F(t, X'_t )-F^{(\eps)}(t, X'_t ) \big) dt, \\
&\hspace{320pt}\Pb\text{-a.s.}\label{eq:u_key_prop_eps}
\end{align}
Note that the previous computations were performed path-by-path. In order to conclude the argument, we need probabilistic information on $\nabla_x u_{\eps} (t,X_t)$ and $\nabla_x u_{\eps} (t,X'_t)$, which is provided by the following proposition, whose proof is postponed until Section \ref{sec:proof_lemmas}.
\begin{proposition}\label{lemm:estim_transp}
For any $q\geq 0$ and $\tau\in(0,T]$, we have
\begin{equation}\label{eq:estimate_gradient_expect}
\Eb\big[  | \nabla_x u_{\eps} (t,X_t) |^q  \big] + \Eb\big[  | \nabla_x u_{\eps} (t,X'_t) |^q  \big] \leq C_{T,q} , \qquad t\in[0,\tau], \quad \eps>0,
\end{equation}
where $C_{T,q}$ is a positive constant (independent of $\eps$, $t$ and $\tau$).
\end{proposition}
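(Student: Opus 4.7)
The strategy is to represent $u_\eps$ pathwise through the characteristic flow of the transport equation \eqref{eq:transport_random_eps}, reduce the estimate of $|\nabla_x u_\eps(t,X_t)|^q$ to an exponential functional of $|\nabla F^{(\eps)}|$ evaluated along the flow, and then exploit the Gaussian density bound of Proposition \ref{prop:density_bound} together with hypothesis [{\bf Pea}] via a Khasminskii-type argument. Since the estimate for $X'_t$ is analogous ($X'$ also solves \eqref{eq:sde_diff_X}), we focus on $X_t$.

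First, since $F^{(\eps)}(t,\cdot)$ is Lipschitz uniformly in $t$, for each $\omega$ the random characteristic flow
\begin{align*}
\partial_s \psi^\eps_{s,t}(x) = V_s(\omega) + F^{(\eps)}(s, \psi^\eps_{s,t}(x)), \qquad \psi^\eps_{t,t}(x) = x,
\end{align*}
is well-defined and smooth, and the method of characteristics gives $u_\eps(t,x) = -\int_t^\tau F^{(\eps)}(s, \psi^\eps_{s,t}(x))\, ds$. Differentiating in $x$,
\begin{align*}
\nabla_x u_\eps(t,x) = -\int_t^\tau \nabla F^{(\eps)}(s, \psi^\eps_{s,t}(x))\, D_x \psi^\eps_{s,t}(x)\, ds.
\end{align*}
The variational equation $\partial_s D_x\psi^\eps_{s,t} = \nabla F^{(\eps)}(s,\psi^\eps_{s,t}) D_x\psi^\eps_{s,t}$ with $D_x\psi^\eps_{t,t} = I$, combined with Gr\"onwall, yields $\|D_x \psi^\eps_{s,t}(x)\| \leq \exp\big(\int_t^s |\nabla F^{(\eps)}(r, \psi^\eps_{r,t}(x))|\, dr\big)$. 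Setting $B(s) := \int_t^s |\nabla F^{(\eps)}(r,\psi^\eps_{r,t}(x))|\, dr$ and telescoping through $\int_t^\tau B'(s)e^{B(s)}ds = e^{B(\tau)}-1$ produces the deterministic pointwise bound
\begin{align*}
|\nabla_x u_\eps(t, x)|^q \leq \exp\Big( q \int_t^\tau |\nabla F^{(\eps)}(r, \psi^\eps_{r,t}(x))|\, dr \Big), \qquad q \geq 0.
\end{align*}

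Now set $x = X_t(\omega)$ and $Y^\eps_r := \psi^\eps_{r,t}(X_t)$. The crucial observation is that, conditional on $\mathcal{F}_t$, the coupled process $(V, Y^\eps)_{r \in [t,\tau]}$ satisfies exactly the regularized kinetic SDE \eqref{eq:sde_reg} started from $(V_t, X_t)$ at time $t$. Proposition \ref{prop:density_bound} then bounds the conditional density of $Y^\eps_{r}$ given $\mathcal{F}_s$ by $C\, \Gamma(\lambda(r-s)^3, \cdot - [\theta^{(\eps)}_{r,s}(V_s, Y^\eps_s)]_2)$, and hypothesis [{\bf Pea}] applied with the (random) shift $\theta = [\theta^{(\eps)}_{r,s}(V_s, Y^\eps_s)]_2$ yields
\begin{align*}
\Eb\big[|\nabla F^{(\eps)}(r, Y^\eps_r)|\, \big|\, \mathcal{F}_s\big] \leq C\, g(r-s), \qquad t \leq s < r \leq \tau,
\end{align*}
uniformly in $\eps$, with $g \in L^1([0,T])$ independent of $\eps$. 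This is the iterated version of the $L^1$ bound already flagged in Remark \ref{rem:bound_mean_nab_Xeps}.

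Finally, writing $A^\eps := \int_t^\tau |\nabla F^{(\eps)}(r, Y^\eps_r)| dr$, symmetrization of the simplex $\{t \leq r_1 \leq \cdots \leq r_k \leq \tau\}$ and iterated conditioning via tower and Markov give $\Eb[(A^\eps)^k\,|\, \mathcal{F}_t] \leq k!\, (C\|g\|_{L^1([0,\tau-t])})^k$, whence $\Eb[e^{qA^\eps}\,|\, \mathcal{F}_t] \leq (1 - qC\|g\|_{L^1([0,\tau-t])})^{-1}$ as soon as $qC\|g\|_{L^1([0,\tau-t])} < 1$. For an arbitrary $q \geq 0$, since $g \in L^1$ the absolute continuity of the Lebesgue integral provides a $\delta>0$ with $qC\|g\|_{L^1([0,\delta])}<1/2$; partitioning $[t,\tau]$ into $N \leq \lceil T/\delta\rceil$ pieces of length at most $\delta$ and iterating the previous bound via tower and Markov across the partition yields $\Eb[e^{qA^\eps}] \leq 2^N =: C_{T,q}$, independent of $\eps$. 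Together with the pathwise bound of the second paragraph, this proves \eqref{eq:estimate_gradient_expect}. The main obstacle is precisely this Khasminskii-type bootstrap: one has to verify carefully the law-identification for $Y^\eps$ so that \eqref{eq:density_upper_bound_1dim} applies with a Gaussian kernel whose center can be treated as an arbitrary point via [{\bf Pea}], and then exploit the absolute continuity of $g \in L^1$ to build a partition whose size depends only on $q$, $T$ and $g$ (never on $\eps$).
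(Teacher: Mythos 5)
Your proposal is correct and follows essentially the same route as the paper: representation of $u_\eps$ via characteristics, the variational-equation/Gr\"onwall pointwise bound on $\nabla_x u_\eps$ in terms of $\exp\big(\int_t^\tau|\nabla F^{(\eps)}|\,dr\big)$, identification of the characteristic flow started from $(V_t,X_t)$ with the regularized kinetic flow \eqref{eq:sde_reg}, and a Khasminskii-type exponential-moment bootstrap built on the Gaussian density estimate of Proposition \ref{prop:density_bound} together with \textup{[{\bf Pea}]}. The only cosmetic differences are that the paper uses the exact identity $\nabla_x u_\eps(t,x)=I_d-\nabla_x X^{\eps,t,x}_\tau$ rather than your telescoping bound, and handles the random initial datum $(V_t,X_t)$ explicitly through the Freezing Lemma applied to the deterministic-start flow $\bar X^{\eps,t,v,x}$ (which is the careful version of your ``conditional on $\mathcal F_t$'' step).
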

Now, applying H\"older inequality to \eqref{eq:u_key_prop_eps} yields
\begin{align}
\Eb\big[ |X_{\tau} - X'_{\tau} |  \big] &\leq \int_0^{\tau}  \Eb\big[  | \nabla_x u_{\eps} (t,X_t) |^q  \big]^{\frac{1}{q}}  \times  \Eb[ | F^{(\eps)}(t, X_t )-F(t, X_t ) |^p ]^{\frac{1}{p}}  dt \\ 
&\quad + \int_0^{\tau} \Eb\big[  | \nabla_x u_{\eps} (t,X'_t) |^q  \big]^{\frac{1}{q}}  \times  \Eb[ | F^{(\eps)}(t, X'_t )-F(t, X'_t ) |^p ]^{\frac{1}{p}} dt  \\
&\quad + \int_0^{\tau} \Eb[ | F^{(\eps)}(t, X_t )-F(t, X_t ) | ]   dt \\&\quad+ \int_0^{\tau} \Eb[ | F^{(\eps)}(t, X'_t )-F(t, X'_t ) | ]  dt , \label{eq:bound_XXprime}
\end{align}
for any $p,q>1$ such that $p^{-1} + q^{-1} =1$. By the assumption $F\in L^\infty_TC^{\frac{1+\g}{3}}$ we have
\begin{equation}
\|F^{(\eps)} - F\|_{L^{\infty}([0,T]\times\R^d)} \to 0 \qquad \text{as }  \eps \to 0^+.
\end{equation}
Owing to this and to estimate \eqref{eq:estimate_gradient_expect}, we can let $\eps\to 0^+$ in \eqref{eq:bound_XXprime} and obtain $\Eb\big[ |X_\tau - X'_\tau |  \big]  = 0$. As $\tau\in(0,T]$ is arbitrary, this proves \eqref{eq:X_X'_equal} and concludes the proof of Theorem \ref{th:main}-(ii).
\hfill\qedsymbol{}

\subsection{Proof of Proposition 
\ref{lemm:estim_transp}}\label{sec:proof_lemmas}

Let $\eps>0$ and $\tau \in(0,T]$ be fixed throughout this section. Recall that  $F^{(\eps)}(t,\cdot)$ is smooth and has sub-linear growth, uniformly with respect to $t\in[0,T]$, in light of the assumption $F \in L^\infty_TC^{\frac{1+\g}{3}}$. Therefore, 
by the method of the characteristic curves, for any $\omega\in \Omega$, the classical solution 
to \eqref{eq:transport_random_eps} admits the representation
\begin{equation}\label{eq:rep_ueps_charact}
u_{\eps,\tau,\omega}(t,x)=u_{\eps}(t,x):= - \int_t^{\tau} F^{(\eps)}(s, X^{\eps,t,x}_s (\omega)) ds, \qquad (t,x)\in [0,\tau]\times\R^d,
\end{equation}
with ${X}^{\eps,\cdot,\cdot}_s (\omega)$ being the unique flow of the ordinary differential equation 
\begin{equation}\label{eq:peano_example_reg}
d f(s) = \big(V_s (\omega) + F^{(\eps)}(s,f(s)) \big) ds.
\end{equation}
Now fix $(t,x)\in [0,\tau]\times\R^d$ and $\omega\in\Omega$. To ease notation, below we omit the explicit dependence on $\omega$ in $u_{\eps}$ and $X^{\eps,t,x}_s$.
Following the standard theory of {ODEs (see the classic reference 
\cite[Chapter 2, Section 2.4]{MR2961944})}, 
we have
\begin{equation}\label{eq:repr_grad_flux}
\nabla_x X^{\eps,t,x}_r = I_d +  \int_t^r \nabla F^{(\eps)}(s, X^{\eps,t,x}_s) \, \nabla_x X^{\eps,t,x}_s ds, \qquad t\leq r\leq T, 
\end{equation}
with $\nabla F^{(\eps)}$ denoting the gradient with respect to the second variable, and Gronwall's inequality yields
\begin{equation}\label{eq:bound_grad_flux}
| \nabla_x X^{\eps,t,x}_{\tau} | \leq \exp\Big( \int_t^{\tau} |\nabla F^{(\eps)}(s,X^{\eps,t,x}_s)| ds\Big).
\end{equation}
Differentiating \eqref{eq:rep_ueps_charact}, and then applying \eqref{eq:repr_grad_flux}, now yields
\begin{equation}\label{eq:grad_u_eps_integral}
\nabla_x u_{\eps} (t,x) = - \int_t^{\tau} \nabla F^{(\eps)}(s, X^{\eps,t,x}_s) \, \nabla_x X^{\eps,t,x}_s ds = I_d -  \nabla_x X^{\eps,t,x}_{\tau},
\end{equation}
and by \eqref{eq:bound_grad_flux} we obtain
\begin{equation}
| \nabla_x u_{\eps} (t,x) | \leq  
\exp\Big( \int_t^{\tau} |\nabla F^{(\eps)}(s, X^{\eps,t,x}_s)| ds\Big)+1.
\end{equation}
Finally, \eqref{eq:estimate_gradient_expect} stems from the lemma below. 
\hfill\qedsymbol{}

\begin{lemma}\label{lem:bound_grad_exp} For any $q\geq 0$, there exists $C_{T,q}>0$ such that
\begin{equation}\label{eq:bound_grad_exp}
\Eb\Big[\exp\Big(q \int_t^T \big| \nabla_x F^{(\eps)}(s,{X}^{\eps,t,X_t}_s)\big| ds \Big)\Big] 
\leq C_{T,q},\qquad 0\leq t \leq T, \quad \eps>0.
\end{equation}
The same estimate holds replacing $X$ with $X'$.
\end{lemma}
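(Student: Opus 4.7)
The plan is to combine the Gaussian density upper bound of Proposition~\ref{prop:density_bound} with Hypothesis~[{\bf Pea}], and then to exponentiate via a Khasminskii-type partition argument. Recall that $X^{\eps,t,X_t}_s$ is the second component, at time $s\geq t$, of the solution to the regularized system~\eqref{eq:sde_reg} started at time $t$ from $(V_t,X_t)$. Since $V$ solves a standard Markovian SDE and, given the path of $V$, the $X^{\eps}$ component is produced by a pathwise ODE, the joint process $(V,X^{\eps,t,X_t})$ is Markov and the bound~\eqref{eq:density_upper_bound_1dim} transfers, via conditioning on $\mathcal{F}_{\sigma}$, to the process restarted at $(V_{\sigma},X^{\eps,t,X_t}_{\sigma})$ with constants independent of $\sigma$ and of the initial datum.

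The first step is to derive a linear conditional bound. For $t\le \sigma\le s\le T$, the Markov property and~\eqref{eq:density_upper_bound_1dim} yield
\begin{equation*}
\Eb\bigl[\,|\nabla_y F^{(\eps)}(s, X^{\eps,t,X_t}_s)|\,\bigl|\,\mathcal{F}_{\sigma}\bigr] \le C \int_{\R^d}|\nabla_y F^{(\eps)}(s,y)|\,\Gamma\bigl(\lambda(s-\sigma)^3,\,y-[\theta^{(\eps)}_{s,\sigma}(V_{\sigma},X^{\eps,t,X_t}_{\sigma})]_2\bigr)\,dy,
\end{equation*}
and Hypothesis~[{\bf Pea}] bounds the right-hand side by $Cg(s-\sigma)$ \emph{uniformly} in the shift parameter $\theta\in\R^d$, hence almost surely. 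Integrating in $s$ then gives, for every deterministic $\sigma\in[t,T]$ and $\tau\in[\sigma,T]$,
\begin{equation*}
\Eb\Bigl[\int_{\sigma}^{\tau}|\nabla_y F^{(\eps)}(s, X^{\eps,t,X_t}_s)|\,ds\,\Bigl|\,\mathcal{F}_{\sigma}\Bigr] \le C\int_0^{\tau-\sigma}g(r)\,dr.
\end{equation*}

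The second step is a classical Khasminskii argument. Since $g\in L^1([0,T])$, one chooses $\eta>0$ small enough that $qC\int_0^{\eta}g(r)\,dr\le 1/2$, and splits $[t,T]$ into at most $N:=\lceil T/\eta\rceil$ subintervals $[t_i,t_{i+1}]$ of length at most $\eta$; the number $N$ depends only on $T$, $q$ and $g$, not on $t$ nor on $\eps$. On each such subinterval, expanding the exponential in Taylor series, writing the $n$-th moment as an integral over the ordered simplex, and iterating the conditional linear bound (tower property, conditioning successively on $\mathcal{F}_{s_{n-1}},\dots,\mathcal{F}_{s_1}$) produces the geometric control $\sum_n (1/2)^n=2$, i.e.
\begin{equation*}
\Eb\Bigl[\exp\Bigl(q\int_{t_i}^{t_{i+1}}|\nabla_y F^{(\eps)}(s, X^{\eps,t,X_t}_s)|\,ds\Bigr)\,\Bigl|\,\mathcal{F}_{t_i}\Bigr] \le 2.
\end{equation*}
Iterating by the tower property over the $N$ subintervals yields~\eqref{eq:bound_grad_exp} with $C_{T,q}=2^N$. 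The argument for $X'$ in place of $X$ is identical: only the $\mathcal{F}_t$-measurable initial datum changes, while all the ingredients above are uniform in the starting point.

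The only delicate point -- and the place where Hypothesis~[{\bf Pea}] is used in its full strength -- is the justification of the conditional linear bound: one has to ensure that the Gaussian upper bound of Proposition~\ref{prop:density_bound} applies to the process restarted at time $\sigma$ from $(V_{\sigma},X^{\eps,t,X_t}_{\sigma})$ with constants independent of $\sigma$ and of the (random) starting point, and that the random recentering given by the forward flow $\theta^{(\eps)}_{s,\sigma}$ is absorbed precisely by the uniformity in $\theta\in\R^d$ postulated in~\eqref{eq:moment_estimate}. Once this is set up, the remainder of the argument is standard.
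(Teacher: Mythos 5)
Your proposal is correct and follows essentially the same strategy as the paper: combine the Gaussian density upper bound of Proposition~\ref{prop:density_bound} with Hypothesis~[{\bf Pea}] to get a uniform $L^1$-in-time bound on the conditional expectation of $|\nabla_y F^{(\eps)}|$ along the flow, then bootstrap to an exponential moment via a Khasminskii partition argument. The one place where you and the paper diverge is in the bookkeeping for the random initial condition $(V_t,X_t)$: you work directly with the process started from the $\mathcal{F}_t$-measurable random datum and condition on $\mathcal{F}_{t_i}$ at each deterministic partition point, implicitly relying on the Markov property of $(V_s, X^{\eps,t,X_t}_s)_{s\ge t}$ with respect to $(\mathcal{F}_s)$; the paper instead introduces the flow $({\bar V}^{t,v}_s,{\bar X}^{\eps,t,v,x}_s)$ adapted to the shifted filtration $(\F^{W^t}_s)$, proves the Khasminskii bound for a \emph{deterministic} starting point $(v,x)$, and then passes to the random datum via the Freezing Lemma and the observation $X^{\eps,t,X_t}={\bar X}^{\eps,t,V_t,X_t}$ a.s. The two routes give the same result; the paper's detour through the flow and the Freezing Lemma makes the measurability and Markov-property justification airtight (via \eqref{eq:measurability_property}), whereas your version is more direct but leaves the Markov property of the original process, and the identification of its transition density with $p^{(\eps)}_{(V,X^\eps)}$, as an implicit assertion that deserves the same care. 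Your final constant $C_{T,q}=2^N$ and the uniformity in $\eps$, $t$, and the starting point are correct.
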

\begin{proof}
Fix $\eps>0$, $t\in [0,T)$, and denote by 
\begin{equation}
({\bar V}^{t,v}_s, {\bar X}^{\eps,t,v,x}_s), \qquad  t \leq s\leq T, \quad v,x\in\R^d,
\end{equation}
the stochastic flow of the It\^o differential system \eqref{eq:sde_reg} with respect to the filtration $(\F^{W^t}_s)_{s\in[t,T]}$, the latter being the natural filtration of the shifted Brownian motion $W^t_s := W_s - W_t$, $s\in [t,T]$. This means that the process $({\bar V}^{t,v}_s, {\bar X}^{\eps,t,v,x}_s)_{s\in[t,T]}$ is the unique solution to \eqref{eq:sde_reg} with $({\bar V}^{t,v}_t, {\bar X}^{\eps,t,v,x}_t) = {(v,x)}$ and, for any $s\in (t,T]$ we have  
\begin{align}
\R^{d}\times \R^d\times \Omega \times [t,s] \ni (v,x,\omega,r)\mapsto ({\bar V}^{t,v}_r (\omega), {\bar X}^{\eps,t,v,x}_r (\omega))\\ \text{ is measurable w.r.t. }  \mathcal{B}^{d}\otimes \mathcal{B}^{d} \otimes \F^{W^t}_s\otimes \mathcal{B}_{[0,s]}.
\label{eq:measurability_property}
\end{align}
Existence and uniqueness of a flow with this property is ensured by Remark \ref{rem:strong_well_regular} together with \cite[Theorem 1.1, Chapter IV]{ikeda2014stochastic}. 
Note that ${\bar X}^{\eps,t,v,x}_s$ is not to be confused with $X^{\eps,t,x}_s$, the flow of the ordinary differential equation \eqref{eq:peano_example_reg}. 

{We now fix $q\geq0$ and $c\in(0,1)$. }
Combining estimate \eqref{eq:density_upper_bound_1dim} with \eqref{eq:moment_estimate} yields
\begin{equation}\label{eq:moment_estimate1}
{q}\int_t^{t+r} \Eb[|\nabla_x F^{(\eps)}(s,{\bar X}^{\eps,t,v,x}_s)|] ds \leq c  , \qquad  
 v,x\in\R^d, \quad 0\leq r\leq \delta, 
\end{equation}
for some positive (small enough) $\delta<1$ independent of $\eps$ and $t$. 
{Notice that $\delta$ depends on $q$.} 
{Employing the Markov property {on $({\bar V}^{t,v},{\bar X}^{\eps,t,v,x})$}, 
we can follow the proof of \cite[Lemma 2.1, Chapter 1]{sznitman1998brownian} (see also \cite[Section 3]{MR2117951})
and obtain}
\begin{equation}\label{eq:exp_moment_flow_Z}
\Eb\Big[\exp\Big(q \int_t^T \big| \nabla_x F^{(\eps)}(s,{\bar X}^{\eps,t,v,x}_s)\big| ds \Big)\Big]  \leq C_{T,q}, \qquad v,x\in\R^{d},
\end{equation}
with $C_{T,q}>0$ independent of $\eps$ and $t$.
Relying on \eqref{eq:measurability_property} and since the filtration $(\F^{W^t}_s)_{s\in[t,T]}$ is independent of $(V_t,X_t)$, the Freezing Lemma {(see \cite[Theorem 5.2.7-13)]{Pascucci2024book})} 
also yields 
\begin{align}
&\Eb\Big[\exp\Big(q \int_t^T \big| \nabla_x F^{(\eps)}(s,{\bar X_s}^{\eps,t,V_t,X_t})\big| ds \Big)\Big]\\
&= \Eb\Big[ \Eb\Big[\exp\Big(q \int_t^T \big| \nabla_x F^{(\eps)}(s,{\bar X_s}^{\eps,t,v,x})\big| ds \Big)\Big]\Big|_{(v,x)=(V_t,X_t)}\Big].   
\end{align}
This, together with \eqref{eq:exp_moment_flow_Z}, proves \eqref{eq:bound_grad_exp} upon observing that $X^{\eps,t,X_t} = {\bar X}^{\eps,t,V_t,X_t}$ almost surely for they both solve the same (regular) ODE.
\end{proof}

\bibliographystyle{acm}
\bibliography{Bibtex-Final}

\end{document}